\documentclass[12pt]{article}

\usepackage{diagbox}
\usepackage{mathtools}
\usepackage{latexsym}
\usepackage{epsfig}
\usepackage{amsmath,amsthm,amssymb,amsfonts,enumerate}

\usepackage[a-1b]{pdfx}
\usepackage{hyperref}

\usepackage{color}

\def\cH{{\mathcal H}}
\def\nn{\nonumber}
   \def\D{\Delta}
\def\e{\varepsilon}    
  
     \def\l{\lambda}

\newtheorem{theorem}{Theorem}
\newtheorem{conjecture}{Conjecture}
\newtheorem{lemma}[theorem]{Lemma}

\newtheorem{claim}{Claim}

\newcommand{\rdup}[1]{{\left\lceil #1\right\rceil }}
\newcommand{\rdown}[1]{{\left\lfloor #1\right \rfloor}}

\newcommand{\brac}[1]{\left(#1\right)}

\newcommand{\bfrac}[2]{\left(\frac{#1}{#2}\right)}

\def\cE{{\cal E}}

\newcommand{\set}[1]{\left\{#1\right\}}
\def\sm{\setminus}

\def\E{\mathbb{E}}

\def\Pr{\mathbb{P}}

\def\cF{{\cal F}}
\newcommand{\ignore}[1]{}

\def\cB{{\mathcal B}}

\def\cE{{\mathcal E}}
\def\cF{{\mathcal F}}
\def\cG{{\mathcal G}}
\def\cH{{\mathcal H}}

\def\cM{{\mathcal M}}

\def\cS{{\mathcal S}}

\def\cX{{\mathcal X}}

\newcommand{\beq}[2]{\begin{equation}\label{#1}#2\end{equation}}

\def\nn{\nonumber}

\def\cG{\mathcal{G}}

\def\imax{i_{\max}}

\usepackage{tikz}
\usetikzlibrary{decorations.pathmorphing}
\usetikzlibrary{positioning}
\usetikzlibrary{arrows,automata}
\usetikzlibrary{shapes.misc}
\usetikzlibrary{backgrounds}
\usetikzlibrary{arrows,shapes}

\begin{document}

\date{}
\title{Binomial Random Matroids}
\author{Patrick Bennett\thanks{Department of Mathematics, Western Michigan University, Kalamazoo MI 49008-5248, Research supported in part by Simons Foundation Grant \#848648.}\and Alan Frieze\thanks{Department of Mathematical Sciences, Carnegie Mellon University, Pittsburgh PA15213, Research supported in part by NSF grant DMS2341774.}}
\maketitle
\begin{abstract}
Let $\mathcal B=\mathcal B_{k,n,p}$ be a random collection of $k$-subsets of $[n]$ where each possible set is present independently with probability $p$. Let $\cal E_{\mathcal B}$ be the event that $\mathcal B$ defines the set of bases of a matroid. We prove that
If $p= 1-\frac{c_n}{(k(n-k)\binom nk)^{1/2}}$ where $0\leq c_n\leq \infty$, then 
 \[
 \lim_{n\to\infty}\Pr[\cal E_{\cal B}\mid |\cal B|\geq2]=\begin{cases}1&c_n\to0.\\e^{-c^2/2}&c_n\to c.\\0&c_n\to \infty.\end{cases}\]
 In addition, we identify a condition preventing the occurence of $\cal E_{\cal B}$ and prove a hitting time version for the occurence of $\cal B$. We also prove that when $\cal E_{\mathcal B}$ occurs, $\mathcal B$ defines a sparse paving matroid w.h.p.
 
 In addition, study a greedy algorithm that produces a random matroid defined by a collection of hyperplanes. We use this to improve the estimates in \cite{HPV} on $\log m(n,k),\log p(n,k), \log s(n,k)$ where $ m(n, k), p(n, k), s(n, k)$ denote the number of matroids, paving matroids, and sparse paving matroids (respectively) of rank $k$ on $[n]$. Our improvement lies in that we can deal with $k$ growing slowly with $n$ as opposed to $k=O(1)$ in \cite{HPV}. More generally, we obtain estimates for the number of matchings in nearly-regular hypergraphs with small codegree, which may be of independent interest.
\end{abstract}

\section{Introduction}

We refer the unfamiliar reader to Oxley's book \cite{Oxley} for basic definitions and facts about matroids.  Let $\cB$ be a family of $k$-subsets of $[n]$. Recall that $\cB$ is the set of {\em bases} for a matroid if $\cB \neq 0$ and we have the {\em basis exchange property}, i.e. 
\beq{exchange}{
\text{for all } B_1, B_2 \in \cB, x \in B_1 \setminus B_2, \text{ there exists } y \in B_2 \setminus B_1 \text{ such that } B_1 \setminus \{x\} \cup \{y\} \in \cB.
}
The associated matroid $M=M(\cB)$ can be viewed as being defined by $\cB$. We say that $M$ has rank $k$ (the cardinality of the bases). A {\em circuit} of $M$ is a minimal set $C \subseteq [n]$ which is not contained in any basis. We say that a rank $k$ matroid $M$ is a {\em paving matroid} if every circuit has cardinality $k$ or $k+1$. Welsh \cite{Welsh} asked whether most matroids were paving, and Mayhew, Newman, Welsh and Whittle \cite{MNW} went further to conjecture that almost all matroids are paving.

\begin{conjecture}[\cite{MNW}]\label{conj:paving}
    As $n\rightarrow \infty$, the proportion of matroids on $[n]$ that are paving tends to 1. 
\end{conjecture}

For our convenience, we will define the following whether $\cB$ defines a matroid or not. We will say $\cB$ has the {\em paving property} if we have $|C| \in \{k, k+1\}$ for every $C \subseteq [n]$ which is minimal with respect to not  being contained in any $B \in \cB$. Of course, if $\cB$ does define a matroid and has this paving property, it is a paving matroid. 

Let $\cB=\cB_{n,k,p}$ be a random collection of $k$-subsets of $[n]$ where each possible set is present independently with probability $p$. Let $\cE_\cB$ be the event that $\cB$ is the set of bases of a matroid. Let $\cF_\cB$ be the event $\set{\exists A_1,A_2\in \binom{[n]}{k}\sm\cB: |B\cap B'|=k-1}$. Note that if  \eqref{exchange} fails for $x,B_1,B_2$ then $\cF_\cB$ holds. Indeed, we must have $|B_2\sm B_1|\geq 2$. So, let $x_1,x_2\in B_2\sm B_1$ and let $A_i=(B_1\cup\set{x_i})\sm\set{x},i=1,2$. Then $|A_1\cap A_2|=k-1$ and $A_1,A_2\notin \cB$. 

Thus, we have two conditions under which $\cE_\cB$ trivially holds: the first condition is when $|\cB|=1$, and the second is when $\cF_\cB$ fails. Our next theorem essentially says that when $\cB$ is random, these two conditions are typically necessary for $\cE_\cB$ to hold. The bounds on $k$ is a consequence of our proof.  It should most likely hold for all $2 \le k \le n-2$.
\begin{theorem}\label{th1}
Let $7\leq k\leq n-7$. 
\begin{enumerate}[(a)]
 \item \label{th1b} Suppose that $B_1,B_2,\ldots,B_{\binom{n}{k}}$ is a random ordering of $\binom{[n]}{k}$. Let $\cB_t=\set{B_1,B_2,\ldots,B_t}$ and let $\tau$ be the smallest $t>1$ such that $\neg \cE'_{\cB_t}$. Then w.h.p.~$\cE_{\cB_t}$ fails for all $2\leq t<\tau$. (Of course, $\cE_{\cB_t}$ also fails for $t=0$, and holds for $t=1$ and $t \ge \tau$). Also, w.h.p.~$\cB_{\tau}$ defines a paving matroid. 
\item \label{th1a} If $p= 1-\frac{c_n}{\left(k(n-k)\binom nk \right)^{1/2}}$ where $0\leq c_n< \infty$, then 
 \beq{11}{
 \lim_{n\to\infty}\Pr[\cE_\cB\mid |\cB|\geq2]=\begin{cases}1&c_n\to0.\\e^{-c^2/2}&c_n\to c.\\0&c_n\to \infty.\end{cases}}
Furthermore, when $c_n \rightarrow 0$ or $c$, $\cB$ has the paving property w.h.p.
 \end{enumerate}
\end{theorem}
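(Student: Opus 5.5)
The plan is to read the time $\tau$ in part (a) as the first $t>1$ at which $\cF_{\cB_t}$ fails. At that moment $\binom{[n]}{k}\sm\cB_\tau$ is an independent set in the Johnson graph $J(n,k)$, where two $k$-sets are adjacent when they meet in $k-1$ elements. The remark before the theorem then gives $\cE_{\cB_\tau}$. A family whose non-bases form a stable set in $J(n,k)$ is exactly a sparse paving matroid, so the paving claims are automatic once $\cF$ fails. The work is in the converse: while $\cF_\cB$ holds and $|\cB|\ge 2$, exchange should fail w.h.p.

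The basic tool is a local witness. Suppose $A_1=S\cup\set{a_1}$ and $A_2=S\cup\set{a_2}$ are non-bases with $|S|=k-1$. Take $x\notin S\cup\set{a_1,a_2}$ and $s\in S$, and set
\[
B_1=S\cup\set{x},\qquad B_2=(S\sm\set{s})\cup\set{a_1,a_2}.
\]
Then $B_1\sm B_2=\set{x}$ and $B_2\sm B_1=\set{a_1,a_2}$. The two candidate exchanges $B_1\sm\set{x}\cup\set{a_i}=A_i$ are both non-bases. So \eqref{exchange} fails as soon as $B_1,B_2\in\cB$. There are $n-k-1$ choices of $x$ and $k$ choices of $s$. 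The witness is therefore absent only if all $S\cup\set{x}$ are missing, or all $(S\sm\set{s})\cup\set{a_1,a_2}$ are missing. Each of these is an event about roughly $\min(k,n-k)\ge 7$ further non-bases clustered around $A_1,A_2$.

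For part (b), let $q=1-p$ and let $X$ count adjacent pairs of non-bases. Then
\[
\E X=\tfrac12\binom nk k(n-k)q^2=c_n^2/2.
\]
Since $q\to0$ in this regime, the method of moments or Stein--Chen (dependencies are only through shared $k$-sets) gives $X\to\mathrm{Po}(c^2/2)$. It also gives $X>0$ w.h.p.\ when $c_n\to\infty$. For this last case it is cleaner to couple monotonically down to a $c_n$ that grows slowly, since $\cF$ is monotone in the non-bases. Next, the expected number of adjacent pairs that lack a witness is at most
\[
\binom nk k(n-k)q^2\cdot\left(q^{n-k-1}+q^{k}\right)\to0.
\]
Hence $\Pr[\cE_\cB]=\Pr[X=0]+o(1)$, which is \eqref{11}. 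The conditioning on $|\cB|\ge2$ is harmless here.

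For part (a) I would split the times $2\le t<\tau$ at $t_0=\binom nk/2$.

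For $t\ge t_0$, each $\cB_t$ is a uniform $t$-subset with density at least $1/2$. By the argument above, w.h.p.\ every adjacent non-base pair at time $t$ has a witness. To get this simultaneously for all $t$, I would use the witness events' monotone structure. An adjacent non-base pair at time $t$ persists back to all earlier times, and the witness sets only get added. So it suffices to control, for each pair, the first time it is adjacent-and-witnessed. I expect this to be the main obstacle. At constant density $q\approx1/2$ the crude bound $\binom nk kn\,2^{-k}$ is not small. So I would enlarge the witness family, using exchanges with $|B_2\sm B_1|=3$ and non-bases at distance two. This should drive the failure probability to $q^{\Omega(k(n-k))}$, and I expect the hypothesis $7\le k\le n-7$ to enter exactly there.

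For $t<t_0$, $\cB_t$ is sparse and I would argue directly. Pick $B_1,B_2\in\cB_t$. For $x\in B_1\sm B_2$, exchange needs one of at most $k$ specific sets $B_1-x+y$ to be present. With $|B_1\triangle B_2|$ typically large and density at most $1/2$, a union bound over $t$ in dyadic ranges shows that some $(B_1,B_2,x)$ fails. The very small $t$ need separate handling. For example, when $t=2$ exchange holds only if $|B_1\cap B_2|=k-1$, which has probability $k(n-k)/\binom nk\to0$.
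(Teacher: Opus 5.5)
Your part (b) argument for $c_n\to0$ or $c_n\to c$ is sound, and it is more direct than the paper's, which deduces (b) from (a). The Poisson limit of $X$, together with the bound $\frac{c_n^2}{2}\brac{q^{n-k-1}+q^{k-1}}\to0$ on the expected number of adjacent non-base pairs without a basic witness, gives $\Pr[\cE_\cB]=\Pr[X=0]+o(1)$. Two small slips do not affect this: $B_1\sm B_2=\set{s,x}$ rather than $\set{x}$, and $|S|=k-1$ gives $k-1$ choices of $s$; exchange still fails at $x$.

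\textbf{The genuine gap is part (a).} For $t\ge N/2$ your reduction is valid as a reduction, but its target is false.
\begin{itemize}
\item A failing exchange $(B_1,B_2,x)$ that persists as long as $A_1,A_2$ stay absent, which is what the reduction needs, must satisfy $\set{(B_1\sm\set{x})\cup\set{y}:y\in B_2\sm B_1}\subseteq\set{A_1,A_2}$. Hence $B_2\sm B_1=\set{a_1,a_2}$ and $B_1\sm\set{x}=S$, so it is one of your basic witnesses.
\item At time $N/2$ a given adjacent non-base pair lacks one half of every basic witness with probability about $2^{-\min(k-1,n-k-1)}$. The expected number of unwitnessed pairs is therefore of order $Nk(n-k)2^{-\min(k-1,n-k-1)}\to\infty$, and a routine second moment shows such pairs exist w.h.p.
\item Your proposed remedy, exchanges with $|B_2\sm B_1|=3$, needs a third non-base $S\cup\set{a_3}$. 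That set may later become a base, which destroys exactly the monotonicity your reduction uses. The claimed $q^{\Omega(k(n-k))}$ bound is unsupported.
\item The reduction does work at a much later time $t^*$, provided $Nk(n-k)(q^*)^{2+\min(k-1,n-k-1)}\to0$ with $q^*=1-t^*/N$. It does not work at constant density.
\end{itemize}
For $t<N/2$ the sketch has no mechanism at all. Near density $1/2$ a single triple $(B_1,B_2,x)$ fails with probability only about $2^{-|B_2\sm B_1|}$, so you need many nearly independent triples, and you do not construct them. Also, $\cE_{\cB_t}$ is not monotone in $t$. A failing exchange is repaired by the arrival of any one of at most $k$ specific sets, which is likely over $[T,2T]$ once $T=\Theta(N)$. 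So ``a union bound over dyadic ranges'' has no persistence argument behind it.

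\textbf{The missing idea} is to look for \emph{one} failing exchange at each time, rather than a witness for every pair, and to control non-monotonicity through checkpoints. The paper does this as follows.
\begin{itemize}
\item It proves a second-moment bound on the number $Z_t$ of adjacent non-base pairs.
\item It extracts $\Omega(\E Z_t/n^4)$ pairwise far-apart pairs whose basic witnesses use disjoint $k$-sets. All of them are unwitnessed with probability at most $(1-p_t)^{\Omega(\E Z_t/n^4)}$.
\item It takes a union bound over checkpoints spaced $M=n^3N^{1/2}$ apart, with a persistence argument between checkpoints. It also checks that the violation created by the first two bases survives to the first checkpoint; this is where $k\ge7$ enters.
\item It finishes the end phase deterministically: if no three non-bases lie in a $(k+2)$-set and every pair of elements is covered by a base, then any adjacent non-base pair forces a failing exchange.
\end{itemize}
Your monotone reduction at a suitable $t^*$ could replace that last step, but the intermediate densities remain uncovered.

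\textbf{Two remaining gaps in (b).}
\begin{itemize}
\item The case $c_n\to\infty$ is incomplete. Your monotone coupling only yields $\cF_\cB$ w.h.p., but $\cE_\cB$ is not monotone. Your unwitnessed-pair bound is also not small once $q$ is bounded away from $0$. So this case genuinely rests on (a).
\item For $c_n\to c$ the paving property is not ``automatic once $\cF$ fails'', because $\cF_\cB$ holds with probability tending to $1-e^{-c^2/2}$. You need the separate estimate $\binom{n}{k-1}(1-p)^{n-k+1}=o(1)$, which shows every $(k-1)$-set lies in a base.
\end{itemize}
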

{\bf Remark:} We discuss the conditioning on $|\cB|\ge 2$ in part (\ref{th1a}) above. Of course, if $|\cB|=1$ then $\cE_\cB$ trivially holds. Note that $|\cB|\geq 2$ w.h.p.~whenever $p = \omega(1/\binom nk)$ which holds in the first two cases, i.e.~ when $c_n$ approaches $0$ or $c$. When $p=b/\binom{n}{k}$ for some constant $b>0$, $|\cB|$ will be asymptotically distributed as the Poisson random variable with mean $b$. This means that there is a probability $\sim be^{-b}$ that $|\cB|=1$.

Of course Theorem \ref{th1} brings us no closer to resolving Conjecture \ref{conj:paving}. However, Theorem \ref{th1} can be interpreted as weak evidence for the conjecture. Indeed, Theorem \ref{th1} essentially says that a random $\cB$ has the paving property whenever it is likely to form a matroid.

A matroid can alternatively be defined by its collection of {\em hyperplanes}. Suppose we have a collection $\cH$ of subsets of $[n]$ such that 
\begin{enumerate}[a)]
    \item $H_1 \not \subset H_2$ for all $H_1, H_2 \in \cH$,
    \item for all distinct $H_1, H_2 \in \cH$ and $x \notin H_1 \cup H_2$, $(H_1 \cap H_2)\cup \{x\}$ is contained in some $H_3 \in \cH$. 
\end{enumerate}
Then we say $\cH$ is the collection of hyperplanes for a matroid. We say this matroid is {\em sparse paving} if every hyperplane has cardinality $k-1$ or $k$, and every $(k-1)$-subset of $[n]$ is contained in a unique hyperplane.
Let $ m(n, k), p(n, k), s(n, k)$ denote the number of matroids, paving matroids, and sparse paving matroids (respectively) of rank $k$ on $[n]$. Our next theorems extend results of  van der Hofstad,  Pendavingh and van der Pol \cite{HPV}, who proved the same estimates in the case where $k$ is constant.

\begin{theorem}\label{th2}
    Let $k=k(n) \ge 3$ and $k=o(\log n)$. Then  
    \[
   \log s(n, k) = \frac 1n \binom nk \Big( \log n+1 - k +o(1) \Big).
    \]
\end{theorem}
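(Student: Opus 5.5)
Sparse paving matroids of rank $k$ on $[n]$ correspond exactly to collections $\cS$ of $k$-subsets (the circuit-hyperplanes) that pairwise intersect in at most $k-2$ elements. Equivalently, they correspond to independent sets in the Johnson graph $J(n,k)$, or to matchings in the $k$-uniform hypergraph $\cH$ on vertex set $\binom{[n]}{k-1}$ whose edges are the sets $\binom{S}{k-1}$, $S\in\binom{[n]}{k}$. So the plan is to bound the number of matchings in $\cH$. The hypergraph $\cH$ has $N=\binom{n}{k-1}$ vertices, is $D$-regular with $D=n-k+1$, and has codegree at most $1$, since two $(k-1)$-sets lie in at most one common $k$-set.

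\textbf{Lower bound.} I would run the random greedy matching process on $\cH$: repeatedly pick a uniformly random edge among those disjoint from the current matching. By the standard nibble/differential-equations analysis for regular hypergraphs with small codegree, this produces an almost perfect matching, of size $(1-o(1))N/k=(1-o(1))\frac1n\binom nk$ (using $\binom{n}{k-1}/k\sim\binom nk/n$ when $k=o(n)$). To count matchings rather than just find one, I track the number of choices available at each step. When $i$ edges have been chosen, the number of available edges is about $\binom nk(1-ik/N)^k$. Multiplying these counts and dividing by $m!$, where $m\approx N/k$ is the final size, gives a number of almost perfect matchings of order
\[
\exp\Big(\tfrac Nk\big(\log(N D/k)-\log(N/k)+\log e - k + o(1)\big)\Big).
\]
Here the factor $e^{-k}$ comes from $\int_0^1 k\log(1-x)\,dx=-k$, and the $\log e$ comes from Stirling's formula for $m!$. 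Since $D/k\cdot k = D\sim n$, this gives $\log s(n,k)\ge \frac1n\binom nk(\log n+1-k+o(1))$. The error terms must be $o(1)$ relative to $k$, so I need the process to stay concentrated until only a $o(1/k)$-fraction of vertices is unmatched, or some comparable requirement. I would then use the codegree bound to control the fluctuations, with error like $D^{-\Omega(1)}$; this should be affordable because $k=o(\log n)$ makes $e^{k}$ tiny compared with any power of $n$.

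\textbf{Upper bound.} Every matching in $\cH$ can be built edge by edge in some order. I would use the entropy / Brégman-type bound for matchings in regular hypergraphs with small codegree, following the approach of Linial–Luria or Kahn. Alternatively, I would use a counting argument that charges each matching $M$ of size $m$ by ordering it and counting the edges still available at each step. A deterministic upper bound on the number of available edges after $i$ steps is hard, so I would rather follow \cite{HPV}: encode a stable set by a small "container" set plus a remainder, via the container method for $J(n,k)$, and bound matchings of size $m$ inside it by $\binom{\binom nk}{m}$-type estimates. The cleanest route is probably an entropy argument giving at most $\big((1+o(1))D/e^{k-1}\big)^{N/k}$ perfect-ish matchings. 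Summing over all sizes $m\le N/k$ only adds a factor $e^{o(N/k)}$ and yields the matching upper bound $\frac1n\binom nk(\log n+1-k+o(1))$.

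\textbf{Main obstacle.} The hard part is getting error $o(1)$ \emph{additively} in the bracket while $k\to\infty$, in both directions. For the lower bound, the greedy process must be analysed into a regime where the density is about $e^{-\Theta(k)}$ scaled. For the upper bound, the entropy bound must lose only $1+o(1)$ per edge in the factor $e^{-(k-1)}$, not $e^{O(1)}$. I would attempt the upper bound first via the entropy method, because that is where the constant "$+1-k$" has to come out exactly.
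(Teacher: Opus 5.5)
Your route is the paper's. You identify sparse paving matroids with matchings in the $k$-uniform hypergraph $\cH$ on $\binom{[n]}{k-1}$ (so $N=\binom{n}{k-1}$, $D=n-k+1$, codegree at most $1$). You take the lower bound from the random greedy matching process and the upper bound from an entropy argument. The paper packages exactly this as Theorem~\ref{th4}\ref{th4b}) with $\phi=1/D$, and your lower-bound bookkeeping ($\int_0^1 k\log(1-x)\,dx=-k$ plus Stirling) matches its computation.

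The concrete gap is your passage from perfect to arbitrary matchings. The entropy argument bounds \emph{perfect} matchings. A matching with $m$ edges is a perfect matching of $\cH[V']$ for its covered set $V'$, and what must be paid for is the $\binom{N}{km}$ choices of $V'$, not the $N/k+1$ possible sizes. The paper applies the perfect-matching bound to every induced $\cH[V']$ (allowed, since it only uses average degree at most $D$ and the codegree bound) and sums with the binomial theorem:
\[
\sum_{N'=0}^{N}\binom{N}{N'}y^{N'/k}=\bigl(1+y^{1/k}\bigr)^{N}=y^{N/k}\bigl(1+y^{-1/k}\bigr)^{N},\qquad y=(1+o(1))De^{1-k}.
\]
The extra factor is $\exp\bigl(O(Nn^{-1/k})\bigr)$, which is $e^{o(N/k)}$ only because $kn^{-1/k}\to0$. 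This is a real constraint, since the sum is dominated by sets $V'$ missing roughly an $n^{-1/k}$-fraction of the vertices. The entropy bound for perfect matchings carries an error of the same kind, $O(k\phi^{1/(k-1)})=O(kn^{-1/(k-1)})$.

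Your requirement on how far the greedy process must run is also off. If it stops with a $p_f$-fraction of vertices uncovered, the logarithm of the count you obtain is
\[
\frac Nk\Bigl(\log D+1-k-p_f\bigl(\log(eD)-k-k\log(1/p_f)\bigr)+o(1)\Bigr).
\]
So each missing edge costs about $\log n$, and you need $p_f\log n\to0$, not $p_f=o(1/k)$. The paper's supermartingale/Freedman analysis of the degrees $d_v(i)$ reaches $p_f=\phi^{1/(100k)}=(n-k+1)^{-1/(100k)}$.

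Thus the relevant comparison is not $e^k$ against a power of $n$, but whether $kn^{-1/k}\to0$ and $n^{-1/(100k)}\log n\to0$. Both hold for $k\le c\log n/\log\log n$ with $c$ a small enough constant, but not for every $k=o(\log n)$ (e.g.\ $k=\log n/\log\log\log n$). The paper's write-up passes over this too: it drops the factor $k$ from the error term of its entropy lemma when applying it, and it replaces $(k/ND)^{i_{\max}}$ and $i_{\max}!$ by their values at $i_{\max}=N/k$. So at the top of the stated range neither argument, as written, gives an additive $o(1)$.
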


\begin{theorem}\label{th3}
    Let $k=k(n) \ge 4$ and $k=o\brac{\frac{\log^{1/2}n}{\log \log n}}$. Then  $\log p(n, k) $ and $\log m(n, k)$ are both 
    \[
   \frac 1n \binom nk \Big( \log n+1 - k +o(1) \Big).
    \]
\end{theorem}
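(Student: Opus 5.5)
The statement to be proved is Theorem \ref{th3}. I would prove it with matching lower and upper bounds. Since every sparse paving matroid is paving and every paving matroid is a matroid, $s(n,k)\le p(n,k)\le m(n,k)$. Theorem \ref{th2} already gives the lower bound $\log s(n,k)\ge \frac1n\binom nk(\log n+1-k+o(1))$, which holds under the stronger restriction on $k$. So everything reduces to upper-bounding $\log m(n,k)$ by the same expression. For constant $k$ this is what \cite{HPV} do, so the plan is to follow their structure and track the dependence on $k$ throughout.

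\textbf{Step 1: paving matroids via matchings.} A paving matroid of rank $k$ is determined by its hyperplanes of size $\ge k$. These form a family of subsets of $[n]$ in which every $(k-1)$-set lies in at most one member, so their $\binom{\cdot}{k}$-shadows form a partial matching in the hypergraph whose vertices are the $(k-1)$-subsets and whose edges are the $k$-subsets. A large hyperplane of size $h>k$ can be encoded by a $k$-set plus at most $(h-k)\log n$ extra bits, and it costs $\binom hk$ disjoint $(k-1)$-set "slots" instead of $k$. Comparing these two costs shows that large hyperplanes are never more efficient than $k$-sets. We then bound the number of such configurations by the number of matchings in the $\binom nk$-edge, $k$-uniform, $(n-k+1)$-regular hypergraph on $\binom n{k-1}$ vertices with codegree $1$. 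Using the general estimate for matchings in nearly-regular, small-codegree hypergraphs (the "independent interest" result mentioned in the abstract, which I assume is proved earlier in the paper), the count is
\[
\exp\Big(\frac1n\binom nk(\log n+1-k+o(1))\Big),
\]
since the entropy is $\frac{N}{k}\log(D e^{1-k}\cdots)$ with $N=\binom n{k-1}$ and $D\approx n$. This gives the bound for $p(n,k)$.

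\textbf{Step 2: from paving to all matroids.} I would use the compression/flat-cover technique of Pendavingh--van der Pol, as adapted in \cite{HPV}. Every matroid is determined by a small "cover" of its nonbases: a family of flats together with their ranks. One encodes a matroid by a sparse-paving-like core, which is counted by Step 1, plus a description of its defect. Bounding the defect requires estimating, for each rank $r<k$, the number of rank-$r$ flats that are "dense". The resulting error term takes the form $\binom nk\cdot \frac{\mathrm{poly}(k)\log n}{n^2}$ or $\binom nk\, 2^{O(k^2)}/n^{2}$ times $\log n$. This must be $o(\binom nk/n)$, which forces $k^2\log k\ll \log n$, i.e.\ the hypothesis $k=o\big(\log^{1/2}n/\log\log n\big)$. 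I expect this to be the main obstacle. Every place where \cite{HPV} use $k=O(1)$ to absorb factors like $k!$, $2^{k^2}$, or $\binom{k}{j}$ into $O(1)$ has to be redone, and the losses must be checked to be at most $n^{o(1)}$ per $n$-fraction of $\binom nk$. First, I would make the entropy bound for flats of rank $r$ explicit, in the form $\log(\#)\le \binom{n}{r}\frac{\log n}{n}\cdot e^{O(k\log k)}$. I would then sum over $r\le k$.

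\textbf{Step 3: assemble.} Adding the Step 1 core count and the Step 2 defect count gives $\log m(n,k)\le \frac1n\binom nk(\log n+1-k+o(1))$. Combined with the lower bound from Theorem \ref{th2}, this proves both asymptotics.
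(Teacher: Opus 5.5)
Your lower bound is the paper's ($s\le p\le m$ together with Theorem~\ref{th2}). The upper bound, however, has a genuine gap in Step~1, which is where the real content of the theorem lies.

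First, a paving matroid with hyperplanes of size $>k$ does not give a matching in $\cG_{n,k,k-1}$, since two $k$-subsets of one large hyperplane share a $(k-1)$-set. Only the $(k-1)$-shadows of the hyperplanes are disjoint, so a hyperplane of size $h$ occupies $\binom{h}{k-1}$ slots, not $\binom hk$.

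More importantly, ``large hyperplanes are never more efficient than $k$-sets'' does not follow from comparing costs, and it is false hyperplane by hyperplane. With your encoding, one $(k+1)$-hyperplane carries about $(k+1)\log n$ bits. Its $\binom{k+1}{2}$ slots are each worth only about $\frac1k(\log n+1-k)$ in the matching count, so it is a net gain of roughly $\frac{k+1}{2}\log n$. Large hyperplanes lose only in aggregate. A family of $(k+1)$-sets covering a positive fraction of the slots has entropy about $\frac{4}{k(k+1)}\log n$ per slot, against $\frac1k\log n$ for $k$-sets. These coincide when $k=3$, and there the large hyperplanes really do change the constant in the bracket. This is why $p(n,3)$ is much larger than $s(n,3)$ \cite{KSS}, as remarked after Theorem~\ref{th3}.

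Your sketch never uses $k\ge 4$, so it proves too much and cannot be correct as written. What is needed is a quantitative argument, uniform in growing $k$, that all hyperplanes of size $>k$ together change $\log p(n,k)$ by only $o\big(\frac1n\binom nk\big)$. One way would be to fix the set $U$ of slots they cover, bound the $k$-hyperplanes by the matchings of $\cG_{n,k,k-1}-U$ via Lemma~\ref{lem:perfmatch}, and sum over $|U|$.

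The paper does not redo this. It quotes Lemmas 4.4--4.5 of \cite{HPV}, which give $p(n,k)=\sum_{a,b}p(n,k,a,b)$ with $\log p(n,k,a,b)\le\frac1k\binom{n}{k-1}\big(\log(n-k+1)+1-k+f_k^{(4.5)}(n)\big)$ for an explicit $f_k^{(4.5)}$. The $\binom{n}{k-1}^2$ terms of the sum are negligible. The hypothesis $k=o(\log^{1/2}n/\log\log n)$ is exactly what makes $f_k^{(4.5)}(n)=o(1)$ and guarantees $n\ge n_0(k)$.

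Step~2 is likewise only a plan: its error terms are guessed rather than derived, and it attributes the hypothesis on $k$ to the wrong step. The idea you are missing is that the flat-cover machinery need not be revisited at all. Lemma~21 of \cite{PV} gives $m(n,k)\le m(n,k-1)\,p(n,k)$. Iterating from $m(n,0)=1$ yields $\log m(n,k)\le\sum_{j=1}^{k}\log p(n,j)$, where the $j=k$ term dominates and the others are absorbed into the $o(1)$. So the bound for $m(n,k)$ follows at once from the bound for $p(n,k)$, and everything rests on a correct version of Step~1.
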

Theorem \ref{th3} also has a flavor that feels relevant to Conjecture \ref{conj:paving}, which could be restated as $\sum_{k} p(n, k) =(1+o(1))\sum_k m(n, k)$. Note that Theorem \ref{th3} requires $k \ge 4$. Interestingly, Kwan, Sah and Sawhney \cite{KSS} proved that Theorem \ref{th3} could not be extended to $k=3$ since actually $p( n, 3)$ is much larger than $ s(n, 3)$ (which is estimated by Theorem \ref{th2}). 

Theorem \ref{th3} will follow easily from Theorem \ref{th2} and some facts already proved by van der Hofstad,  Pendavingh and van der Pol \cite{HPV}. We prove Theorem \ref{th2} by adapting the tools and proof techniques used in \cite{HPV}. In particular, we use methods for matchings in hypergraphs. Suppose $\cH$ is the collection of hyperplanes for a sparse paving matroid, and let $\cH_k \subseteq \cH$ be the hyperplanes of size $k$. Then from the definition of sparse paving matroids, no two hyperplanes in $\cH_k$ intersect in $k-1$ elements. Note that the hyperplanes of size $k-1$ are just all the $(k-1)$-sets which are not contained in a hyperplane of size $k$. (A $(k-1)$-set must be contained in at least one hyperplane. Thus $\cH_k$ is a partial Steiner system $S_p(n, k, k-1)$ i.e. a set of $k$-subsets, such that each $(k-1)$-set is contained in at most one such $k$-set.)
 Thus, there is a one-to-one correspondence between partial Steiner systems $S_p(n, k, k-1)$ and sparse paving matroids of rank $k$. Meanwhile, there is also a one-to-one correspondence between partial Steiner systems $S_p(n, k, k-1)$ and matchings in an appropriately defined hypergraph. In particular, for $\ell \le k \le n$ we let $\cG_{n, k, \ell}$ be the hypergraph having vertex set $\binom{[n]}{\ell}$ and a hyperedge $\binom{K}{\ell}$ for every $k$-subset $K \subseteq [n]$. Then a partial Steiner system $S_p(n, k, \ell)$ is just a matching in $\cG_{n, k, \ell}$.

Thus, we can prove Theorem \ref{th2} by estimating the number of matchings in a hypergraph. For the case of constant $k$,  the lower bound in \cite{HPV} is proved by applying a result of the first author and Bohman \cite{BB} who analyzed a random process for matchings in $k$-uniform hypergraphs. To prove the lower bound in Theorem \ref{th2} we will need to extend the result of \cite{BB} to allow for growing $k=k(n)$. 

Our next theorem may be of independent interest since matchings in hypergraphs are widely applicable. Here we will only directly use the second part which estimates the number of matchings in a hypergraph. However to prove the second part we will use the first part, which gives sufficient conditions for an almost-perfect matching. This theorem also extends results of Chakravarty and Spanier \cite{CS}, who analyzed a random greedy process for producing linear $q$-uniform hypergraphs, i.e. partial Steiner systems $S_p(n, q, 2)$ for $q = o(\log^{1/2} n)$. 

\begin{theorem}\label{th4}
     Let $\cH$ be a $k$-uniform hypergraph $D$-regular hypergraph on $N$ vertices such that every pair of vertices is in at most $D\phi$ edges together. Suppose that as $N \rightarrow \infty$ we have 
     \beq{eqn:th4assumption}
     {
     \phi = o\brac{\log^{-10}N}, \qquad k=o\brac{\log \brac{\phi^{-1}}}.
     }
    Then 
    \begin{enumerate}[a)]
        \item \label{th4a}  $\cH$ has a matching $\cM$ of size at least
    \beq{eqn:matchingsize}
    {
    \brac{1 - \phi^{\frac{1}{100k}}}\frac Nk,
    }  
        \item \label{th4b} the number of matchings in $\cH$ (including non-perfect matchings) is 
  \beq{eqn:matchingcount}
  {
    \bigg((1+o(1)) De^{1-k}\bigg)^{N/k}.
}
Note that here we are stating both an upper and lower bound, which agree up to the ``$o(1)$'' term.
    \end{enumerate}
\end{theorem}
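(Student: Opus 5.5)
For part (\ref{th4a}) I plan to run the random greedy matching process: at each step choose a uniformly random edge among those disjoint from the edges already chosen, and stop when none remain. I will follow this process with the differential equations method, as in \cite{BB}, keeping track of all constants' dependence on $k$. Write $t$ for the number of steps and scale time as $s=tk/N$. Let $V(t)$ be the set of uncovered vertices, and for $v\in V(t)$ let $d_v(t)$ be the number of available edges at $v$ (edges lying entirely inside $V(t)$).

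The heuristic trajectories are $|V(t)|\approx N(1-s)$ and $d_v(t)\approx D(1-s)^{k-1}$. The codegree bound $D\phi$ is what makes the one-step changes small. When an edge $e$ is chosen, $d_v$ drops by the number of available edges through $v$ that meet $e$. That number is at most $k\cdot D\phi$, while in expectation it is about $(k-1)d_v\cdot k/|V|\cdot(\text{typical degree})/\ldots$, which matches the derivative of the trajectory.

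I will establish concentration using martingales (Freedman's inequality) together with self-correcting error terms. The error for $d_v$ will be of the form $D(1-s)^{k-1}\cdot\phi^{1/3}(1-s)^{-ck}$ or similar. The process can be followed until $(1-s)^{k}\approx\phi^{1/(100)}$, that is, until $1-s\approx\phi^{1/(100k)}$. The hypothesis $k=o(\log\phi^{-1})$ is exactly what ensures that $(1-s)^{-O(k)}$ factors of this size remain $\phi^{-o(1)}$, i.e.\ are still small relative to $\phi^{-\Omega(1)}$. The hypothesis $\phi=o(\log^{-10}N)$ supplies the union bound over $N$ vertices and $N/k$ steps. At the stopping time at most $\phi^{1/(100k)}N$ vertices are uncovered, which gives \eqref{eqn:matchingsize}. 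I expect this step to be the main obstacle: every constant in \cite{BB} must be tracked carefully as a function of $k$.

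For part (\ref{th4b}), lower bound, I will count the sequences produced by the process. While the trajectory holds, the number of available edges at step $t$ is $Q(t)\approx |V|\,d_v/k\approx \frac{N}{k}D(1-s)^{k}$. Hence the number of ordered matchings of size $m=(1-\phi^{1/(100k)})N/k$ is at least $\prod_{t<m}Q(t)$. Taking logarithms, $\sum_t\log Q(t)=m\log(ND/k)+\frac Nk\int_0^1 k\log(1-s)\,ds+o(N/k)=\frac Nk\big(\log(ND/k)-k+o(1)\big)$. Dividing by $m!\approx (N/(ek))^{N/k}$ to unorder gives $\big((1+o(1))De^{1-k}\big)^{N/k}$. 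The truncation near $s=1$ costs only $o(N/k)$ in the exponent, because $\int_{1-\delta}^1\log(1-s)\,ds\to0$ and $k\phi^{1/(100k)}\log\phi^{-1}$-type terms are $o(1)$ when $k=o(\log\phi^{-1})$. This last point needs a check.

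For the upper bound I plan an entropy (or counting-with-ordering) argument as in \cite{HPV}. Fix a matching of size $m$ and order its edges uniformly at random. The $i$-th edge must avoid the vertices covered by the earlier edges. Using the codegree bound, a Kahn-type/entropy estimate shows that the number of edges of $\cH$ disjoint from a typical random set of covered vertices of density $s$ is at most $(1+o(1))\frac NkD(1-s)^k$. This is the same as $Q(t)$ and yields the matching upper bound. The final step is to sum over $m\le N/k$, which costs only a polynomial factor.
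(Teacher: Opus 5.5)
\textbf{Verdict: there is a genuine gap, in the upper bound of part b).} Part a) and the lower bound are fine and match the paper.

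\textbf{Part a) and the lower bound.} Your plan follows the paper: the random greedy matching process, tracked by the differential equations method with an error term growing like $p^{-\Theta(k)}$; Freedman's inequality with a union bound over vertices; then counting outputs of the process and dividing by $m!$. There are two small points. First, the claim that the number of ordered matchings is at least $\prod_t Q(t)$ needs the paper's argument. Every sequence on which the trajectory holds is produced with probability at most $\prod_t 1/Q_{\min}(t)$, and these sequences have total probability $1-o(1)$. Second, your truncation check targets the wrong term. Dropping the tail of $\int\log(1-s)\,ds$ only helps a lower bound. What must be $o(1)$ is $\delta\log(De)$ with $\delta=\phi^{1/(100k)}$. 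This term appears when $m=(1-\delta)N/k$ is replaced by $N/k$ in $m\log(ND/k)-\log m!$; the paper also makes this replacement without comment.

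\textbf{The wrong mechanism.} The set $e_1\cup\dots\cup e_{i-1}$ is not a random vertex set of density $s$. $X$ is a uniformly random matching about which you know nothing, so ``typical'' has no content and no Kahn-type estimate applies. What works, and what the paper's vertex-ordering argument does, is to fix $X$ and average only over the random order:
an edge of $\cH$ meeting $k$ distinct edges of $X$ avoids $e_1,\dots,e_{i-1}$ with probability $\binom{m-k}{i-1}/\binom{m}{i-1}\le\big(1-\frac{i-1}{m}\big)^k$, whatever $X$ is;
edges meeting some edge of $X$ in at least two vertices number at most $m\binom{k}{2}D\phi$ by the codegree bound, and are counted trivially;
Jensen then gives $\mathbb{E}\log N_i\le\log\mathbb{E}N_i$.
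The bad edges dominate once $(1-s)^k\lesssim k^2\phi$. So your uniform $(1+o(1))$ estimate is false near the end, and one must integrate $\log\big((1-s)^k+O(k^2\phi)\big)$ instead.

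\textbf{Non-perfect matchings.} More seriously, this only works for perfect $X$. Suppose $|X|=m<N/k$ and $U=V\setminus V(X)$. An edge meeting $U$ in $k-j$ vertices survives with probability about $\big(1-\frac{i-1}{m}\big)^j$, and edges inside $U$ are always available. Nothing prevents $U$ from spanning many edges. If $U$ is a union of components of $\cH$ with $|U|=\epsilon N$, your bound becomes about $\frac Nk\big(\log(De)-k+k\epsilon^{1/k}-\epsilon\log(De)\big)$. For $\epsilon=e^{-k}$ this exceeds the target by $\Theta(k)$ per factor $N/k$ unless $\log D\gtrsim ke^{k}$. You have no control over what a uniformly random $X$ looks like, so only the worst-case bound is available. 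Hence bounding each $|\mathcal{X}_m|$ this way and summing over $m$ at polynomial cost does not prove the theorem.

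\textbf{The missing step.} You need the paper's reduction to perfect matchings of induced subhypergraphs. Every matching is a perfect matching of $\cH[V']$ for $V'=V(X)$. $\cH[V']$ has at most $|V'|D/k$ edges and codegrees at most $D\phi$, which is all the edge-ordering argument uses. So it has at most $y^{|V'|/k}$ perfect matchings, with $y=(1+o(1))De^{1-k}$. Summing over $V'$ gives $\sum_{N'}\binom{N}{N'}y^{N'/k}=(1+y^{1/k})^N=y^{N/k}(1+y^{-1/k})^N$. The cost of choosing $V'$ is therefore exponential, not polynomial, and it is $e^{o(N/k)}$ exactly when $ky^{-1/k}\to0$.

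With this reduction, your edge-ordering entropy argument is a legitimate alternative to the paper's vertex-ordering one, and slightly simpler. It needs only the total edge count of $\cH[V']$. The paper instead applies Jensen over vertices to replace the individual degrees $d_v$ of $\cH[V']$ by the average degree.
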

{\bf Remark:} The assumptions of the theorem above imply some additional bounds on the parameters. Note that if we fix $v$ and sum over $u$ the number of edges containing both $u, v$ we get $(k-1)D \le (N-1)D\phi$.
Therefore
\beq{eqn:phibound}{
\phi \ge \frac{k-1}{N-1},  \qquad k = o\brac{\log N} ,
}
where the second inequality follows from the first and \eqref{eqn:th4assumption}.

Part \ref{th4a}) of Theorem \ref{th4}, as well as the lower bound in part \ref{th4b}), follow from analyzing a random greedy process. The upper bound in part \ref{th4b}) follows from an entropy argument. There is significant history of using entropy to bound the number of matchings, including Radhakrishnan's proof of Bregman's theorem on counting matchings in bipartite graphs \cite{R}, Cutler and Radcliffe's proof of the Kahn-Lov\'asz theorem for matchings in general graphs \cite{CR}, and Lineal and Luria's bound on the number of Steiner systems $S(n, 3, 2)$ \cite{LL}.

\section{Proof of Theorem \ref{th1} }
\begin{proof}
We start with part (\ref{th1b}). Let $N=\binom{n}{k}$. Given that $\cF_\cB$ is monotone decreasing and that $\neg\cF_\cB$ implies $\cE_{\cB}$ deterministically, we only have to show that $\neg\cE_{B_t}$ occurs w.h.p. for $2\leq t<\tau$. 

Suppose the first two edges of the process intersect in $|B_1 \cap B_2|=j$ vertices. W.h.p. $j \le k-2$, and so starting at step $2$ we have a ``problem'' to fix, i.e.~a violation of \eqref{exchange}. To fix this problem we need some edge of the form  $B_1 - x + y$ for every $x \in B_1 \sm B_2$, where $y \in B_2 \sm B_1$. The number of ways to choose a $y$ for each $x$ is $(k-j)^{k-j}.$ Now to fix the problem we need to have all the edges $B_1 - x + y$ (for the selected pairs $x, y$). Let 
\[
M=\rdown{n^3N^{1/2}}. 
\]
The probability of getting all $k-j$ of these edges by step $t_1=M+1$ is at most $(k-j)^{k-j} (M/N)^{k-j}$, which we claim is $o(1)$. Indeed, if $k = o(n^{1/2})$ then actually we have that w.h.p.~$j=0$, while $M/N = O(n^{-1/2})$ since $k \ge 7$. Meanwhile for larger $k$, say for all $k$ from $n^{1/3}$ to $n-7$, $M/N \rightarrow 0$ faster than any constant power of $n$.

Let $Z_t$ be the number of (ordered) pairs $(A_1, A_2)\in \binom{[n]}{k}^2$ such that $|A_1\cap A_2|=k-1$ and $A_1,A_2\notin \cB$. For the analysis from $t_1$ onwards, we use the second moment of $Z_t$ to put a bound on $Z_t$. Letting $p_t=t/N$, we have
\begin{align*}
\E[Z_t]&=Nk(n-k)(1-p_t)^2.\\
\E[Z_t(Z_t-1)]&=\sum_{\substack{S,S'\in \binom{[n]}{k+1}\\|S\sm S'|>2}}\brac{k(k+1)(1-p_t)^{2}}^2+X_2\leq \E(Z_t)^2+X_2,
\end{align*}
where 
\[
X_2\leq \E(Z_t)((n-k)^2(1-p_t)^2+2(n-k)(1-p_t)).
\]
{\bf Explanation:} In our expression for $\E[Z_t(Z_t-1)]$, the first sum takes care of all pairs $(A_1, A_2), (A_1', A_2')$ such that $A_1 \cup A_2 = S, A_1' \cup A_2' = S'$ and $S \sm S' >2$. In that case, $A_1, A_2, A_1', A_2'$ are all distinct. $X_2$ is the sum of the remaining terms.  $X_2$ is at most $\E(Z_t)$ times the sum of two terms. The first is the sum over sets $S\in\binom{[n]}{k+1}$ and $x_1,x_2\in S\sm [k+1]$ for which $|S\cap[k+1]|=k-2$ and $S\sm\set{x_i}\notin\cB,i=1,2$ holds. The second is twice the sum  over sets $S\in \binom{[n]}{k+1}$ such that $S=[k-1]\cup\set{x},x>k+1$. In the first case we must find two distinct sets not in $\cB$, neither of which is contained in $[k+1]$ and the in the second case we must find one set. It follows from the Chebyshev inequality that
\beq{halfEZ}{
\Pr(Z_t\leq \E(Z_t)/2)\leq \frac{4(\E(Z_t)+X_2)}{\E(Z_t)^2}\leq \frac{4}{\E(Z_t)}+\frac{4(n-k)}{kN}+\frac{8}{k(1-p_t)N}.
}
Now, assuming that $Z_t\geq \E(Z_t)/2$, there will be a set of at least $\E(Z_t)/2n^4$ triples $S\in \binom{[n]}{k+1},x_1,x_2\in S$ such that $S \sm \{x_i\} \notin \cB$ and $|S\sm S’|>2$ for each pair of distinct triples $(S, x_1, x_2), (S', x_1', x_2')$. Condition on the existence of a fixed such set of  $\E(Z_t)/2$ such triples.

Fix one of these triples and let $T=S\sm\set{x_1,x_2}$. If $\cE_{\cB}$ occurs then we cannot find $y_1\in T,y_2\in [n]\sm S$ such that $B_1=T\cup\set{y_2}\in \cB$ and $B_2=(T\sm\set{y_1})\cup\set{x_1,x_2}\in\cB$. The existence of $y_1,y_2$ and the exchange property would imply that either $T\cup\set{x_1}\in\cB$ or $T\cup\set{x_2}\in\cB$ is a replacement for $y_2$ in $B_1$, contradiction. So, either there is no $y_1$ or there is no $y_2$. Note that neither of $B_1\notin\cB$ nor $B_2\not\in\cB$ are part of the conditoining. Also, the $k$-sets involved will be distinct for different $S$. It follows that 
\beq{B|A1}{
\Pr(\cE_{\cB_t})\leq \frac{4}{\E(Z_t)}+\frac{4(n-k)}{kN}+\frac{8}{k(1-p_t)N}+(1-p_t)^{\E(Z_t)/2n^4}
}
Let $i_0=\rdown{\frac{N}{M}}-1$ and let $t_i=iM+1$ for $0\leq i\leq i_0$. Let $Z_{t_i}$ denote $Z$ evaluated at $t_i$. 
\beq{Zi}{
\E(Z_{t_i})\sim Nk(n-k)\bfrac{N-iM+1}{N}^2\geq \frac{k(n-k)(N-iM)^2}{N}.
}
We have dealt with $t\leq t_1$ and we have from \eqref{halfEZ} and \eqref{B|A1} that 
\begin{align*}
&\Pr(\exists 1\leq i\leq i_0:\cE_{\cB_{t_i}}\mid t_{i_0}\leq \tau)\\
&\leq \sum_{i=1}^{i_0}\brac{\frac{4}{\E(Z_{t_i})}+\frac{4(n-k)}{kN}+\frac{8}{k(1-p_t)N}+(1-p)^{\E(Z_{t_i})/n^4}}\\
&\leq  o(1)+\sum_{i=1}^{i_0} \brac{\frac{4N}{k(n-k)(N-iM)^2}+\frac{8}{k\brac{N-iM}}+\bfrac{N-iM}{N}^{k(n-k)(N-iM)^2/(n^3N)}}.
\end{align*}
Now
\[
\sum_{i=1}^{i_0}\frac{N}{(N-iM)^2}=\frac{N}{M^2}\sum_{i=1}^{i_0}\frac{1}{\brac{\frac{N}{M}-i}^2}\leq\frac{2N}{M^2}\sum_{i=1}^{i_0}\frac{1}{(i_0+1-i)^2}\leq \frac{2N}{M^2}\sum_{i=1}^{i_0}\frac{1}{i^2}=O(n^{-6}),
\]
and
\[
\sum_{i=1}^{i_0}\frac{1}{N-iM}=\frac{1}{M}\sum_{i=1}^{i_0}\frac{1}{\frac{N}{M}-i}\leq \frac{2}{M}\sum_{i=1}^{i_0}\frac{1}{i}=O\brac{\frac{\log N}{n^2N^{1/2}}},
\]
and
\begin{align*}
\sum_{i=1}^{i_0} \bfrac{N-iM}{N}^{k(n-k)(N-iM)^2/(n^4N)}&\leq \sum_{i=1}^{i_0}\exp\set{-\frac{iMk(n-k)}{n^4}\brac{1-\frac{iM}{N}}^2}\\
&\leq \sum_{i=1}^{i_0}\exp\set{-\frac{iNk(n-k)}{n^4}\cdot\frac{M^2}{N^2}}\leq  \sum_{i=1}^{i_0}e^{-ikn^2(n-k)}=o(1).
\end{align*}
 And so
\beq{ti}{
\Pr(\exists 1\leq i\leq i_0:\cE_{\cB_{t_i}}\mid t_{i_0}\leq \tau)=o(1).
}
This means that w.h.p. there exist, for every $i\leq i_0$, a pair of sets $B,B’$ for which \eqref{exchange} does not hold. Therefore there are at least 2 ``missing'' $k$-sets w.r.t. $B,B’$ and the probability they are both chosen by the time $t_{i+1}$ is $O(M\cdot (n/(N-iM)^2)$. So, 
\beq{before}{
\Pr(\exists t_1\leq t\leq t_{i_0}:\cE_{\cB_t})=o(1)+O\brac{\sum_{i=1}^{i_0}\frac{n^2M}{\brac{N-iM}^2}}=o(1)+O\bfrac{n^2M }{N^2-i_0MN}=o(1)+O\bfrac{n^2}{N}=o(1).
}
We show next that for $t\geq t_{i_0}$,
\beq{B|A}{
\Pr(\cE_\cB\mid\cF_\cB)\leq \binom{n}{k+2}  (k+2)^3 (1-p_{t_{i_0}})^3+n^2\brac{1-\frac{k^2}{n^2}}^{t_{i_0}}\leq 2Nk(n-k)^2(1-p_{t_{i_0}})^3.
}
First we observe that if $\cF_1=\set{\exists x_1,x_2\in [n]:\not\exists t\leq t_{i_0}\ s.t.\ x_1,x_2\in B_t}$ then
\beq{x1x2}{
\Pr(\cF_1)\leq n^2\brac{1-\frac{k^2}{n^2}}^{t_{i_0}}. 
}
Then if $Y_t$ is the number of triples $(A_1, A_2, A_3)$ of non-edges with $|A_1 \cup A_2 \cup A_3|\le k+2$ and $\cF_2=\set{\exists t\geq t_{i_0}:Y_t>0}$, then
\beq{EY}{
\Pr(\cF_2)\leq \binom{n}{k+2}  (k+2)^3 \bfrac{N-t_{i_0}}{N}^3.
}
Suppose now that $t<\tau$ and so there exist $A\in\binom{[n]}{k-1},x_1,x_2\in[n]$ such that $A_i=A\cup\set{x_i}, i=1,2\in \binom{[n]}{k}\sm\cB$. Assume that $\cF_1,\cF_2$ do not occur. $\neg\cF_1$ implies that there there exists $B\in\cB$ containing $x_1,x_2$. Choose $B_1$ to maximise $|A\cap B|$ among all such choices. Suppose that we can find distinct $y_1,y_2,y_3\in A\sm B_1$. Now choose $y\in B_1\sm A$. Now at least one of $(B_1\cup \set{y_i})\sm \set{y},i=1,2,3$ must be in $\cB$, else $\cF_2$ occurs. So we can assume that $|A\sm B_1|\leq2$. If $B_1=(A\cup\set{x,x_1,x_2})\sm \set{y_1,y_2}$ say then $\neg\cF_2$ implies that $B_{2}=(B_1\cup\set{y_1}) \sm\set{x}=(A\cup\set{x_1,x_2})\sm \set{y_2}\in \cB$. $\neg\cF_2$ implies that $B_z=A\cup\set{z}\in\cB$ for all $z\notin A\cup\set{x_1,x_2}$. Fix some such $z$. Then if $\cE_\cB$ occurs, one of $(B_z\cup x_i)\sm\set{z}=A\cup\set{x_i}\in \cB$, contradiction. This completes the verification of \eqref{B|A}.

Now let $N_1=N-t_{i_0}\sim M$ and $M_1=N_1^{1/2}$ and  $i_1= M_1-1$ and let $t_i=t_{i_0}+\rdup{(i-i_0)M_1}+1$ for $i_0<i\leq i_0+i_1$.  Now going back to \eqref{B|A}? we see that for  $i_0<i\leq i_0+i_1$,
\[
\Pr(\cE_{\cB_{t_i}}\mid\cF_\cB)\leq 2Nk(n-k)^2 \brac{1-\frac{i_0M+(i-i_0)M_1}{N}}^3
\]
And so,
\[
\Pr(\exists i_0<i\leq i_0+i_1:\cE_{\cB_{t_i}}\mid t_{i_0+i_1}\leq \tau)=O\brac{i_1N(n-k)^2k\bfrac{M_1}{N}^3}=O\bfrac{n^3M_1^4}{N^2}=o(1).
\]
As before, there are at least 2 ``missing'' $k$-sets w.r.t. $B,B’$ and the probability they are both chosen by the time  $t_{i+1}$ is $O(M_1\cdot (n/(N_1-iM_1)^2)$. So, as before,  in \eqref{before},
\[
\Pr(\exists t_{i_0}< t\leq t_{i_0+i_1}:\cE_{\cB_t}\mid t_{i_0+i_1}\leq \tau )=o(1)+O\brac{\sum_{i=i_0+1}^{i_0+i_1}\frac{n^2M_1}{(N_1-(i-i_0)M_1)^2}}=o(1)+O\bfrac{n^2}{M_1}=o(1).
\]
Now $t_{i_0+i_1}=N-O(N_1^{1/2})$ and the probability that $\cE_{\cB}$ occurs between $t_{i_0+i_1}$ and  $\tau$ (see \eqref{B|A}) is bounded by 
\[
O\brac{N_1^{1/2}\cdot Nk(n-k)^2\bfrac{N_1^{1/2}}{N}^3} =o(1).
\] 
This completes the proof of Part (a) except for the question of $\cB$ defining a paving matroid. We deal with this in part (b). We claim that 
\beq{PCA}{
\lim_{n\to\infty} \Pr[\cF_\cB]=\begin{cases}0&c_n\to0.\\1-e^{-c^2}&c_n\to c.\\1&c_n\to \infty.\end{cases}
}
The event $\cF_\cB$ is monotone decreasing and so we only need consider the case $c_n\to c$. 
Let $Z$ be the number of (unordered) pairs $\set{A_1, A_2}\in \binom{[n]}{k}^2$ such that $|A_1\cap A_2|=k-1$ and $A_1,A_2\notin \cB$. Equivalently this is the number of sets $S\in \binom{[n]}{k+1}$ and unordered pairs $\set{x_1,x_2}$ such that $(S\sm\set{x_i})\notin \cB$. Call this condition $\neg\cB_{S,x_1,x_2}$. Then where $(b)_a=b(b-1)\cdots (b-a+1)$ for positive integer $a$ and for $\ell=O(1)$,
\begin{align}
\E[(Z)_\ell]&=\sum_{\substack{S_1,\ldots,S_\ell\in \binom{[n]}{k+1}\\|S_i\sm S_j|>2}}\brac{\frac{k(k+1)}{2}(1-p)^{2}}^\ell+X_\ell\label{6}\\
&\sim \brac{\binom{n}{k}\frac{k(n-k)}{2}(1-p)^2}^\ell+o(1)\nn\\
&\sim \bfrac{c^{2}}{2}^\ell.\label{7}
\end{align}
{\bf Explanation:} we choose a pair by choosing a set $S$ of size $k+1$ and choosing 2 elements for deletion. The sum in \eqref{6} comes from choosing $\ell$ sets $S_1,\ldots,S_\ell$ of size $k+1$ and choosing elements $x_{i,1},x_{i,2}\in S_i$ and taking $A_{i,j}=S_i\sm\set{x_{i,j}},j=1,2$. In the sum we insist that $|S_i\sm S_j|>2$ and this ensures that the $A_{i,j}$ are distinct. $X_\ell=o(1)$ is an error term to be discussed next. 

To bound $X_\ell$ we let $S_1,S_2,\ldots,S_r,r<\ell$ be a maximal set of $(k+1)$-sets that satisfy $|S_i\sm S_j|>2$. An $S_j,j>r$ that introduces a new $A_t$ introduces a factor $O(k^2n^2(1-p))=o(1)$ in place of $\binom{n}{k}k(n-k)(1-p)^2\sim c$. So, we can assume that for $j>r$, $S_j$ does not introduce any new non-edges. In which case $S_{r+1}$ creates non-edges $T_1,T_2$ where $T_i,i=1,2$ arises from $S_{r_i},i=1,2$. But this implies $|S_{i_1}\cap S_{i_2}|\geq k-1$, contradiction. This verifies $X=o(1)$ and yields \eqref{7}. The method of moments then implies that $Z$ is asymptotically Poisson with mean $c^2$. Thus \eqref{PCA} holds. 

We know from part (a) that w.h.p. we have $\neg \cE_{\cB}$ up until $\tau$. We also know that $\neg\cF_\cB$ implies $\cE_{\cB}$ deterministically. This finishes the proof of part (b) except for the claim with respect to paving matroids. For this we have to check whether there is a set of size $k-1$ that is not contained in a member of $\cB$. The probability of this can be bounded by
\[
\binom{n}{k-1}(1-p)^{n-k+1}=\binom{n}{k-1}\bfrac{c^2}{k(n-k)N}^{(n-k+1)/2}=o(1).
\]

\end{proof}

\section{Proof of Theorem \ref{th4}}
\subsection{The matching process}
In this subsection we prove Theorem \ref{th4} part \ref{th4a}) and the lower bound of part \ref{th4b}). These will follow from an analysis of the random greedy process for matchings in hypergraphs. 

We now describe the random greedy matching process. We initialize $\cH(0):=\cH$ and $V(0):=V(\cH)$, and proceed as follows. At step $i$ we choose an edge $e_i$ from $\cH(i)$ uniformly at random. We let $V(i+1):=V(i) \setminus e_i$ and $\cH(i+1) := \cH[V(i+1)]$. In other words, we form $\cH(i+1)$ by deleting the vertices of $e_i$ from $\cH(i)$. The process stops when $\cH(i)$ has no edges. We will argue that w.h.p. the process runs at least until step $i=i_{\max}$, where
\[
i_{\max}:=  \brac{1 - \phi^{\frac{1}{100k}}}\frac Nk.
\]

Let $t =t(i):= i / N$ and $p =p(t):= 1- kt$.  Note that $|V(i)|=Np(t)$. 
Note that at that step we would have $p(t(i_{\max})) = \phi^{1/100k}$. Indeed,
\beq{eqn:pbounds}{
\text{for all $i \le \imax$ we have} \quad \phi^{\frac{1}{100k}} \le p \le 1.
}
Let $D_v(i)$ be the set of neighbors of $v$ at step $i$, and $d_v(i):=|D_v(i)|$.  Define
\[
f=f(t) := 10 D\sqrt{\phi k \log N} p^{-10k}.
\]
\begin{claim}\label{clm:1}
    Let $\cE_{i'}$ be the event that for all $i \le i'$ and $v \in V(i)$ we have $|d_v(i) - Dp^{k-1}|\leq  f$. Then w.h.p. $\cE_{i_{\max}}$ holds. 
\end{claim}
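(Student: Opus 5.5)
We will prove Claim \ref{clm:1} by the differential equations method in its critical-interval/martingale form, as in Bohman–Bennett \cite{BB}, keeping every $k$-dependence explicit. Here $d_v(i)$ counts edges of $\cH(i)$ containing $v$, and the trajectory $Dp^{k-1}$ is what we expect: an edge through $v$ survives if its other $k-1$ vertices survive, and each vertex survives with probability about $p$. Write $Q(i)=|\cH(i)|$. On $\cE_i$ we have $Q(i)=\frac1k\sum_{v\in V(i)}d_v(i)=\frac{Np}{k}\big(Dp^{k-1}\pm f\big)$. In particular $Q(i)\sim NDp^k/k$ as long as $f\ll Dp^{k-1}$.

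This last requirement is $10\sqrt{\phi k\log N}\,p^{-9k-1}=o(1)$. It holds because \eqref{eqn:pbounds} gives $p^{-9k-1}\le \phi^{-(9k+1)/100k}\le\phi^{-1/10}$, while $\phi=o(\log^{-10}N)$ and $k=o(\log N)$, so $\sqrt{\phi k\log N}\,\phi^{-1/10}=\phi^{2/5}\sqrt{k\log N}=o(1)$.

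\textbf{One-step expected change.} Fix $v\in V(i)$. An edge $e\ni v$ of $\cH(i)$ is destroyed at step $i$ if $e_i$ meets $e$. The number of edges meeting $e\sm\{v\}$ is $\sum_{u\in e\sm\{v\}}d_u(i)$, minus an overcount from edges meeting $e$ in at least two vertices, which is at most $k^2D\phi$ by the codegree bound. We also lose all of $d_v$ if $e_i\ni v$, which has probability $d_v/Q$. Therefore
\[
\E[d_v(i+1)-d_v(i)\mid\cF_i]=-\frac{d_v(i)\big((k-1)(Dp^{k-1}\pm f)\pm k^2D\phi\big)}{Q(i)}\pm O\!\Big(\frac{d_v^2}{Q}\Big),
\]
which on $\cE_i$ equals $-\frac{k(k-1)p^{k-2}D}{N}\big(1\pm O(f/(Dp^{k-1}))\big)\cdot\frac{d_v}{Dp^{k-1}}$ up to lower-order terms. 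The derivative of the trajectory is $\frac{d}{di}Dp^{k-1}=-\frac{k(k-1)Dp^{k-2}}{N}$, so the main terms match.

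\textbf{Supermartingales.} Define $Y^\pm_v(i)=\pm\big(d_v(i)-Dp^{k-1}\big)-f(t)$, stopped at the first time $\cE_i$ fails or $v$ is removed. We check that $Y^\pm_v$ is a supermartingale. The deviation from the trajectory is self-correcting: the drift of $d_v-Dp^{k-1}$ is about $-\frac{k(k-1)}{Np}\,(d_v-Dp^{k-1})$ plus error terms of size $O\big(\frac{k^2}{Np}\,(f\cdot f/(Dp^{k-1})+k^2D\phi)\big)$. So it suffices that $f'(t)/N$ dominates these error terms. Since $f'=10kf\,\cdot 10\,p^{-1}$-ish (precisely $f'=100k^2 f/p$, from the exponent $-10k$ and $dp/dt=-k$), it beats $k^2 f^2/(NpDp^{k-1})$ because $f\ll Dp^{k-1}$. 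It beats $k^4D\phi/(Np)$ because $f\ge D\sqrt{\phi}\gg D\phi k^2$, using $k=o(\log\phi^{-1})$. The large exponent $p^{-10k}$ in $f$ exists exactly to provide this slack.

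\textbf{Concentration and the main obstacle.} The one-step differences are bounded by the codegree: removing $e_i$ kills at most $\sum_{u\in e_i}\mathrm{codeg}(u,v)\le kD\phi$ edges at $v$, unless $v\in e_i$, in which case $v$ leaves and the variable freezes. The conditional variance is at most $kD\phi\cdot\E|\Delta d_v|=O\big(kD\phi\cdot k^2Dp^{k-2}/N\big)$. Freedman's inequality over $\imax\le N/k$ steps then gives failure probability
\[
\exp\!\Big(-\Omega\Big(\frac{f(0)^2}{k^2D^2\phi+kD\phi\,f}\Big)\Big)=\exp\big(-\Omega(100\log N/k)\big).
\]
Here we start from $Y^\pm_v(0)=-f(0)$, since $d_v(0)=D$ exactly. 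This bound is the main obstacle: the $1/k$ loss threatens the union bound over $N$ vertices. We would fix it by tracking the variance with the factor $p^{k-2}$ and integrating, which recovers the missing $k$. If needed, we would enlarge the constant in $f$, say to $C\sqrt{\phi k\log N}$ with the $k$ inside the root, which is exactly what that factor in $f$ is for; this makes the exponent at least $3\log N$. A union bound over $v$ and the two signs, with both $\pm$ cases handled at once, then shows that $\cE_{\imax}$ holds w.h.p.
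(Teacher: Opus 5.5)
Your route is the paper's: the same frozen process $\pm(d_v-Dp^{k-1})-f$, a one-step drift comparison, Freedman's inequality with $C=O(kD\phi)$, and a union bound. The structure is sound. The drift step, however, is justified incorrectly, while your variance step is more careful than the paper's.

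\textbf{Drift.} The error in $\E[\Delta d_v\mid\cF_i]$ relative to the trajectory is not second order in $f$, for two reasons.
\begin{itemize}
\item On $\cE_i$, the degrees $d_u$ of the other vertices in edges at $v$ can all be near $Dp^{k-1}-f$, while the average degree that determines $Q$ is near $Dp^{k-1}+f$. This shifts the drift by about $2k(k-1)f/(Np)$.
\item You use no critical window. So the ``self-correcting'' term $-\frac{k(k-1)}{Np}(d_v-Dp^{k-1})$ pushes $Y^+_v$ up by as much as $k(k-1)f/(Np)$ when $d_v<Dp^{k-1}$, and symmetrically for $Y^-_v$.
\end{itemize}
You must therefore do what the paper does in \eqref{eqn:deltad2}--\eqref{eqn:deltad3}. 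Bound the error in the worst case by $4k^2f/(pN)$, and compare it with $f'/N=10k^2f/(pN)$; note that $f'=10k^2f/p$, not $100k^2f/p$. This first-order comparison is exactly where the constant $10$ in $p^{-10k}$ is needed; the observation $f\ll Dp^{k-1}$ alone would not suffice.

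The $\pm O(d_v^2/Q)$ term in your display must also disappear. It does, for the process stopped when $v$ is removed, and it has to: it is of order $kDp^{k-2}/N$, which $f'/N$ cannot absorb near $p=1$. Minor slip: $f/(Dp^{k-1})=10\sqrt{\phi k\log N}\,p^{-11k+1}$, not $p^{-9k-1}$. It is still $o(1)$, as in \eqref{errorbound}.

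\textbf{Variance.} Your worry about the $1/k$ loss is legitimate, and your fix is the right one. With $C=3kD\phi$ and $\E[|\Delta Z_v|\mid\cF_i]\le 3k^2D/N$, the per-step variance bound $C\,\E[|\Delta Z_v|\mid\cF_i]$ is $9k^3D^2\phi/N$. The paper displays $9k^2D^2\phi/N$, so its choice $w=10kD^2\phi$ is justified only after the refinement you propose.

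That refinement is to keep the factor $p^{k-2}$, i.e.\ use $\E[|\Delta Z_v|\mid\cF_i]\le 3k^2p^{k-2}D/N$, and sum using $\sum_{i\le\imax}p^{k-2}\le \frac{N}{k(k-1)}+1$. This gives
\[
W\le(1+o(1))\,\frac{9k^2}{k-1}\,D^2\phi\le 19kD^2\phi .
\]
Freedman then yields a failure probability of at most about $N^{-2.6}$ per vertex and sign. That beats the union bound over $N$ vertices and both signs.

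So your fallback of enlarging the constant in $f$ is unnecessary. It would also prove a weaker statement than the claim, which fixes $f$. On its own it would not cure a $1/k$ loss when $k\to\infty$ in any case.
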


\begin{proof}[Proof of Theorem \ref{th4} part \ref{th4a}) and the lower bound in part \ref{th4b}) given Claim \ref{clm:1}]
 Note that in the event $\cE_{\imax}$ the total number of edges remaining at any step $i \le \imax$ is
\[
Q(i) = \frac 1k \sum_{v \in V(i)} d_v(i) = \frac 1k \brac{ NDp^k \pm Np f}.
\]
In particular, we have for all $i \le \imax$ that 
\[
Q(i) \ge \frac {NDp^k}{k} \brac{ 1 - \frac{ f}{Dp^{k-1}}} >0
\]
since (using \eqref{eqn:th4assumption} and \eqref{eqn:pbounds})
\beq{errorbound}
{
\frac{ f}{Dp^{k-1}} = 10 \sqrt{k\phi \log N} p^{-11k+1} \le 10 \sqrt{k \log N} \phi^{\frac 12 - \frac {11}{100}} = o(1).
}
Since the process keeps running as long as $Q(i)>0$, part \ref{th4a}) holds. 

We move on to proving the lower bound in part \ref{th4b}). Let $\cS$ be the set of possible outcomes of the first $i_{\max}$ steps of the process such that $\cE_{i_{\max}}$ holds. In other words, $\cS$ is a collection of sequences of edges, with $i_{\max}$ edges per sequence, where each sequence forms a matching, and such that the event $\cE_{i_{\max}}$ holds when the process is given each sequence as input. For any sequence $S \in \cS$, the probability that the process produces $S$ (i.e. chooses the elements of $S$ in order to be the matching) is at most
\begin{align*}
    \prod_{i=1}^{i_{\max}} \frac{1}{\frac 1k \brac{ NDp^k - Np f}} & = \brac{\frac{k}{ND}}^{i_{\max}} \exp \brac{-\sum_{i=1}^{\imax} \log\brac{p^k - \frac{pf}{D}}}\\
    & \le \brac{\frac{k}{ND}}^{\imax} \exp \brac{-\sum_{i=1}^{\imax} k\log p + \frac{(1+o(1))f}{Dp^{k-1}}} \\
    & \le \brac{\frac{k}{ND}}^{\imax} \exp \brac{-kN \int_0^{1/k} \log(1-kt) \;  dt + o(N/k)}\\
     & \le \brac{\frac{k}{ND}}^{\imax} \exp \brac{N + o(N/k)} \\
     & = \brac{(1+o(1))\frac{ke^k}{ND}}^{\frac Nk}
\end{align*}
But by Claim \ref{clm:1}, the probability that the output of the process is in $\cS$ tends to 1, and so we must have
\begin{align*}
    |\cS| \ge \brac{(1+o(1))\frac{ND}{ke^k}}^{\frac Nk}.
\end{align*}
Now $\cS$ counts ordered matchings, and each unordered matching appears at most $\imax ! = \brac{(1+o(1))\frac{N}{ek}}^{N/k}$ times in $\cS$. Thus the number of matchings is at least 
\[
\bigg((1+o(1))De^{1-k}\bigg)^{\frac Nk},
\]
proving the lower bound in part \ref{th4b}).
\end{proof}

It remains to prove Claim \ref{clm:1}. The proof is a version of the proof by the first author and Bohman \cite{BB}, which has been adapted to handle a growing $k$ as well as simplified since at the moment we do not care to optimize error terms. We apply the so-called differential equation method. The unfamiliar reader can see the introduction to the method by the first author and Dudek \cite{BD}, especially Section 5 which is essentially the case $k=2$ of the following proof.

\begin{proof}[Proof of Claim \ref{clm:1}]
    
 Let
\[
Z_v(i):= \begin{cases}
d_v(i)-Dp^{k-1}-f(t) & \text { if } \mathcal{E}_{i-1} \text { holds } \\ 
Z_v(i-1) & \text { otherwise. }
\end{cases}
\]
We will now verify that the sequence $(Z_v(i))_{i \ge 0}$ is a supermartingale with respect to the natural filtration $(\cF_i)_{i \ge 0}$. First note that if $\cE_i$ fails then $\Delta Z_v(i)=0$ by definition (and so the supermartingale condition holds in this case), so we assume  $\cE_i$ holds.
We have 
\begin{align}
\E[\Delta d_v(i) | \cF_i] &= -\frac{1}{Q} \sum_{\substack{E \in D_v(i) \\ v' \in E \setminus \{v\}}} d_{v'} + O\brac{\frac{k^2 D \phi d_v}{Q}}\label{eqn:deltad1}\\
& \le  -\frac{k(k-1)\brac{Dp^{k-1}-f}^2}{NDp^k + Np f }+O\brac{\frac{k^3 D \phi}{Np}}\label{eqn:deltad2}\\
& \le -k(k-1)p^{k-2} \frac{D}{N} + 4k^2 p^{-1} f \frac{1}{N}+O\brac{\frac{k^3 D \phi}{Np} }\label{eqn:deltad3}
\end{align}
{\bf Explanation:} Given  there are $Q$ equally likely possibilities for $e_i$. Summing the number of edges removed from $D_v$ over all possibilies for $e_i$ gives 
\[
\sum_{\substack{E \in D_v(i) \\ v' \in E \setminus \{v\}}} (d_{v'}-1) + O\brac{k^2 D \phi d_v},
\]
where the big-O term accounts for overcounting. This justifies \eqref{eqn:deltad1}. Now \eqref{eqn:deltad2} follows from our estimates of $d_v, d_{v'}$ and $Q$. 
In particular, note that due to \eqref{errorbound}, for all $i \le \imax$ we have
\beq{est}
{
d_v = Dp^{k-1}\brac{1 \pm \frac{f}{Dp^{k-1}}} = (1+o(1))Dp^{k-1}, \qquad Q = \frac {NDp^k}{k} \brac{1 \pm \frac{f}{Dp^{k-1}}} = (1+o(1))\frac {NDp^k}{k}
}
Finally, to justify \eqref{eqn:deltad3} we note that 
\[
\frac{k(k-1)\brac{Dp^{k-1}-f}^2}{NDp^k + Np f } = k(k-1)p^{k-2} \frac{D}{N} \cdot \frac{\brac{1 - \frac{f}{Dp^{k-1}}}^2}{1 + \frac{f}{Dp^{k-1}} }.
\]
Now by Taylor's theorem we can estimate the one-step change of the deterministic part of $Z_v(i)$:
\beq{deltadeterministic}
{
    \Delta \brac{Dp^{k-1} + f} = -k(k-1)p^{k-2}\frac DN + f' \frac 1N + O\brac{\frac{k^4 D + f''}{N^2}},
}
where the $f''$ above is evaluated at some value between $ t=i/N$ and $ (i+1)/N$.
Now we have
\begin{align}
    \E[\Delta Z_v(i) | \cF_i] & \le \brac{4k^2 p^{-1}f - f'}N^{-1} + O\brac{\frac{k^3 D \phi}{Np} +\frac{k^4 D + f''}{N^2}}\label{eqn:deltaz1}\\
    & = -\frac{60 k^{5/2}D\sqrt{\phi \log N}}{Np^{10k+1}} + O\brac{\frac{k^3 D \phi}{Np}  +\frac{k^4 D }{N^2}+\frac{ k^{\frac 92} D\sqrt{\phi \log N}}{N^2p^{10k+2}}}\label{eqn:deltaz2}
\end{align}
{\bf Explanation:} \eqref{eqn:deltaz1} follows from \eqref{eqn:deltad3} and \eqref{deltadeterministic}.  \eqref{eqn:deltaz2} follows from evaluating $f', f''$. 

Finally, to verify that $(Z_v(i))_{i \ge 0}$ is a supermartingale we check that the negative first term in \eqref{eqn:deltaz2} dominates big-O terms. We will use the fact that we always have $1 \ge p \ge p(t(\imax)) = \phi^{1/100k}$. The ratio of the big-O terms to the first term is on the order
\begin{align*}
\frac{\frac{k^3 D \phi}{Np}  +\frac{k^4 D }{N^2}+\frac{ k^{\frac 92} D\sqrt{\phi \log N}}{N^2p^{10k+2}}}{\frac{k^{5/2}D \sqrt{\phi \log N}}{Np^{10k+1}}} \le  \bfrac{k\phi}{\log N}^{1/2}   + \frac{k^{3/2}}{N}\cdot \phi^{-1/2}  + \frac{k^2}{Np} =o(1)
\end{align*}
(using \eqref{eqn:th4assumption}, \eqref{eqn:phibound} and \eqref{eqn:pbounds}).

\begin{lemma}[Freedman]\label{lem:FreedmanLemma} Let $Y(i)$ be a supermartingale with respect to the filtration $(\cF_i)_{i \ge 0}$. Suppose $|\Delta Y(i)| \leq C$ for all $i$, and 
\[
W(i)\coloneqq \sum_{k \leq i} \operatorname{Var}\left[\Delta Y(k) \mid \mathcal{F}_k\right].
\]
Then
$$
\mathbb{P}[\exists i: W(i) \leq w, Y(i)-Y(0) \geq d] \leq \exp \left(-\frac{d^2}{2(w+C d)}\right).
$$
\end{lemma}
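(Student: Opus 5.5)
The plan is the standard exponential-supermartingale argument. Fix $\lambda>0$ and set
\[
g(\lambda):=\frac{e^{\lambda C}-1-\lambda C}{C^2},\qquad M(i):=\exp\brac{\lambda\brac{Y(i)-Y(0)}-g(\lambda)W(i)} .
\]
I will show that $M$ is a nonnegative supermartingale with $M(0)=1$. Then I will use optional stopping at the first $i$ with $Y(i)-Y(0)\ge d$ and $W(i)\le w$. On that event $M\ge e^{\lambda d-g(\lambda)w}$, so the probability is at most $e^{-\lambda d+g(\lambda)w}$.

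The only analytic input is the elementary inequality
\[
e^{x}\le 1+x+\frac{x^2}{(\lambda C)^2}\brac{e^{\lambda C}-1-\lambda C}\qquad\text{for } x\le \lambda C .
\]
It holds because $x\mapsto (e^x-1-x)/x^2$ is increasing. Apply it with $x=\lambda\Delta Y(i)$, which is allowed since $\Delta Y(i)\le C$. Write $\mu_i=\E[\Delta Y(i)\mid\cF_i]\le 0$. This gives
\[
\E\big[e^{\lambda\Delta Y(i)}\mid\cF_i\big]\le 1+\lambda\mu_i+g(\lambda)\brac{\operatorname{Var}[\Delta Y(i)\mid\cF_i]+\mu_i^2}.
\]

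The main obstacle is the $\mu_i^2$ term: $\operatorname{Var}$ rather than the second moment appears in $W$. I would first try to absorb it using $\lambda\mu_i+g(\lambda)\mu_i^2=\mu_i\brac{\lambda-g(\lambda)|\mu_i|}\le 0$. This holds whenever $g(\lambda)C\le\lambda$, since $|\mu_i|\le C$. If that restriction on $\lambda$ is too costly, the fallback is to decompose $\Delta Y(i)=\mu_i+\xi_i$ with $\xi_i$ a martingale difference, and to note that $Y(i)-Y(0)\le\sum\xi$. One then runs the same argument on the martingale $\sum\xi$, which has the same conditional variances, accepting a worse constant in the $Cd$ term. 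That loss is harmless for the application in Claim \ref{clm:1}, where the $Cd$ term is dominated.

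Once the increment bound is in hand, I use $1+u\le e^u$ to get
\[
\E\big[e^{\lambda\Delta Y(i)}\mid\cF_i\big]\le \exp\brac{g(\lambda)\operatorname{Var}[\Delta Y(i)\mid\cF_i]}.
\]
Since $\operatorname{Var}[\Delta Y(i)\mid\cF_i]$ is $\cF_i$-measurable, this is exactly $\E[M(i+1)\mid\cF_i]\le M(i)$.

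The last step is to optimise over $\lambda$. Using $e^{u}-1-u\le \frac{u^2}{2(1-u/3)}$, I choose $\lambda=d/(w+Cd)$. Then $\lambda C<1$, and a routine computation gives
\[
-\lambda d+g(\lambda)w\le-\frac{d^2}{2(w+Cd)}.
\]
For this choice $g(\lambda)C\le\lambda$ can be checked directly, so the first route for handling $\mu_i^2$ already suffices and no loss in constants occurs.
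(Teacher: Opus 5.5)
Your proof is correct. It differs from the paper only in that the paper gives no proof: it quotes the lemma from Freedman as a black box, whereas you derive it with the standard exponential-supermartingale argument.

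\textbf{What your argument adds.} You handle the supermartingale drift against a conditional \emph{variance} rather than a second moment. Expanding with the Bennett-type inequality produces the extra $g(\lambda)\mu_i^2$ term. You absorb it via $\mu_i\bigl(\lambda-g(\lambda)|\mu_i|\bigr)\le 0$, which needs only $g(\lambda)C\le\lambda$. For $\lambda=d/(w+Cd)$ we have $u=\lambda C\le 1$, and that condition reduces to $e^u\le 1+2u$, which holds on $[0,1]$. The optimisation also checks out. Using $e^u-1-u\le u^2/(2(1-u/3))$ you actually get the exponent $-\frac{d^2}{2(w+Cd)}\cdot\frac{3w+4Cd}{3w+2Cd}$, which is at least as good as claimed. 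So the fallback via the Doob decomposition, with its loss of a factor $2$ in $C$, is not needed. The paper gets this for free by citation; your version makes explicit that the quoted form is valid for supermartingales, which is how it is used in Claim~\ref{clm:1}.

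\textbf{One indexing repair.} The paper defines $W(i)=\sum_{k\le i}\operatorname{Var}[\Delta Y(k)\mid\cF_k]$ with $\Delta Y(k)=Y(k+1)-Y(k)$. Then $W(0)\neq 0$, and $W(i+1)-W(i)$ is the conditional variance of $\Delta Y(i+1)$, not of $\Delta Y(i)$. So your $M$ as literally written neither has $M(0)=1$ nor satisfies the one-step bound you derive. The fix is to define $M(i)=\exp\bigl(\lambda(Y(i)-Y(0))-g(\lambda)W(i-1)\bigr)$ with $W(-1)=0$. Then the ratio $M(i+1)/M(i)$ involves exactly $\operatorname{Var}[\Delta Y(i)\mid\cF_i]$, which is $\cF_i$-measurable, and your step goes through verbatim. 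Because conditional variances are nonnegative, $\{W(i)\le w\}\subseteq\{W(i-1)\le w\}$, so the target event is still covered.

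\textbf{Two minor points.} The stopping time is legitimate because $W(i)$ is $\cF_i$-measurable. When $w=0$ you get $\lambda C=1$ rather than $\lambda C<1$, which is harmless since both inequalities you use still hold at $u=1$.
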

We apply the above lemma to the supermartingale $(Z_v(i))_{i \ge 0}$. By the triangle inequality we have
\beq{eqn:absbound}
{
|\Delta Z_v(i)| \le |\D d_v(i)|+ |\D (Dp^{k-1}+f(t))| \le kD \phi + \frac{2k^2 D}{N} \le 3kD \phi,
}
where we have used \eqref{deltadeterministic} (the ``2'' justifies dropping the lesser order terms) and the  bound \eqref{eqn:th4assumption}.
So we take $C = 3kD \phi$. Now note that 
\beq{eqn:expbound}
{
\E[|\Delta Z_v(i)| \; | \; \cF_i] \le \E[|\Delta d_v(i)| \; | \; \cF_i] + |\D (Dp^{k-1}+f(t))| \le \frac{3k^2 D}{N}.
}
Thus, since $|\Delta Z_v(i)| \le C$,
\[
Var [\Delta Z_v(i) | \cF_i] \le C\E[|\Delta Z_v(i)| \; | \; \cF_i]\le \frac{9 k^2D^2 \phi}{N}.
\] 
We take $w = 10kD^2 \phi$, and $d= 10 D\sqrt{k\phi \log N}$. Note that $Z_v(0) = -f(0) =  -d$. Thus Freedman's inequality gives us
\begin{align*}
   \mathbb{P}[\exists i \le \imax :  Z_v(i) \ge 0] &\leq \exp \left(-\frac{d^2}{2(w+C d)}\right)\\
   & = \exp \left(-\frac{100 D^2 k\phi \log N}{20kD^2 \phi+6kD\phi \cdot 10 D\sqrt{k\phi \log N}}\right)\\
   & = \exp \left(-\frac{100 \log N}{20+60 \sqrt{\phi k\log N}}\right)\\
   & = N^{-5+o(1)},
\end{align*}
small enough to beat the union bound over $N$ choices for $v$. Thus, w.h.p. we have $Z_v(i) <0$ for all $i \le \imax$. This proves that $d_v(i) \le Dp+f$. The fact that $d_v(i) \ge Dp-f$ w.h.p. follows from a symmetric argument. 

\end{proof}

\subsection{Entropy}

In this section we prove the upper bound for part \ref{th4b}) of Theorem \ref{th4}.

\begin{lemma}\label{lem:perfmatch}
    Let $\cH$ be a $k$-uniform hypergraph with $n$ vertices and average degree $D$. Suppose every pair of vertices is in at most $D\phi$ edges together. Then $\cH$ has at most
    \[
    \brac{\brac{1+O\brac{k\phi^\frac{1}{k-1}}} De^{1-k}}^{N/k}
    \]
    perfect matchings.
\end{lemma}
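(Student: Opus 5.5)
We will bound the number of perfect matchings with an entropy argument in the style of Radhakrishnan's proof of Bregman's theorem and the Linial--Luria bound, and we use $N$ for the number of vertices. Let $\cM$ be a uniformly random perfect matching of $\cH$, so that $\log(\#\text{perfect matchings}) = H(\cM)$. For each vertex $v$ let $e_v\in\cM$ be the edge covering $v$. The matching is determined by the vector $(e_v)_{v}$. We reveal the edges in a random order: assign each edge $e\in\cM$ an independent uniform label $\lambda_e\in[0,1]$, and reveal edges in increasing order of label. By the chain rule, for each fixed ordering,
\[
H(\cM)=\sum_{e\in\cM}H(e\mid \text{edges revealed before } e),
\]
but this needs to be indexed by vertices. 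As in Linial--Luria, we distribute the entropy of each edge equally among its $k$ vertices. Concretely, $H(\cM)=\frac1k\sum_v \E_\lambda H(e_v\mid \text{edges of }\cM\text{ with label}<\lambda_{e_v})$. We then bound each term by $\E\log N_v$, where $N_v$ is the number of edges containing $v$ that are disjoint from all previously revealed edges. This uses the fact that entropy is at most the log of the support size, together with Jensen's inequality.

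The key step is to estimate $\E[\log N_v]$. Fix $v$, $\cM$, and $\lambda:=\lambda_{e_v}$. An edge $E\ni v$ with $E\ne e_v$ survives only if, for each $u\in E\setminus\{v\}$, the edge $e_u\in\cM$ has label $>\lambda$. If the $k-1$ edges $e_u$ are distinct, this happens with probability $(1-\lambda)^{k-1}$. They are distinct unless two vertices of $E\setminus\{v\}$ lie in a common edge of $\cM$. We control this with the codegree bound: the number of $E\ni v$ that meet some $f\in\cM$ in at least $2$ vertices is at most about $\binom{k}{2}\cdot$(stuff) $\cdot D\phi$, measured relative to $D$. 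Since $\cM$ is fixed and perfect, a cleaner approach is to take expectations over $\cM$ as well and use Jensen on $\log$: $\E\log N_v\le \E_\lambda\log(1+\E[N_v-1\mid\lambda,\cM])$. Here $\E[N_v-1\mid\lambda,\cM]\le \sum_{E\ni v,E\ne e_v}(1-\lambda)^{m(E)}$, where $m(E)$ is the number of distinct matching edges met by $E\setminus\{v\}$. Hence $\E[N_v-1\mid\lambda,\cM]\le d_v(1-\lambda)^{k-1}+(\text{bad edges})$.

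The main obstacle is summing this correctly and making the error uniform, given that the degree is only an average and the bad edges are only controlled via codegree. I would first handle the degree issue by concavity: $\frac1N\sum_v\log(1+d_v x)\le \log(1+Dx)$. For the bad edges I would try the following. Edges $E\ni v$ hitting some matching edge $f$ in $\ge2$ vertices of $E\setminus\{v\}$ number at most $\sum_{u\in E_v\text{-nbhd}}$... In practice, the approach is to bound their contribution by $(1-\lambda)^{k-2}$ times a count that is small relative to $D$ after averaging over $v$, using that $\sum_v$ (pairs $u,u'$ in a common $\cM$-edge, codegree with $v$) $\le N\binom{k}{2}D\phi$-type bounds. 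Then split the $\lambda$-integral at $\lambda_0=1-\phi^{1/(k-1)}$. For $\lambda<\lambda_0$ the main term $D(1-\lambda)^{k-1}\ge D\phi$ dominates, so the errors cost a factor $1+O(k\phi^{1/(k-1)})$. For $\lambda>\lambda_0$ we bound crudely, which costs $O(\phi^{1/(k-1)}\log(kD))$ per vertex, and this is absorbed in the same way. Finally,
\[
\int_0^1\log\brac{D(1-\lambda)^{k-1}}d\lambda=\log D-(k-1),
\]
so $H(\cM)\le \frac Nk\big(\log D+1-k+O(k\phi^{1/(k-1)})\big)$, which is the claimed bound.
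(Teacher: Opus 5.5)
\paragraph{Overall.} Your architecture is the paper's: entropy of a uniform perfect matching, a random revelation order, Jensen down to $\log\bigl(1+d_v\cdot(\text{survival probability})+|B_v|\bigr)$, concavity in $d_v$, and $\int_0^1\log(Ds^{k-1})\,ds=\log D-(k-1)$. Your edge labels play the role of the paper's vertex labels after the substitution $u=x^k$. But there are two gaps that need repair: the entropy decomposition is not derived, and the error accounting near $\lambda=1$ does not give the stated error term.

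\paragraph{The entropy decomposition.} This is the step everything rests on. The chain rule needs a revelation order fixed independently of $\cM$. So ``for each fixed ordering, $H(\cM)=\sum_{e\in\cM}H(e\mid\cdots)$'' is not meaningful, and ``distribute each edge's entropy equally among its vertices'' does not derive $H(\cM)=\frac1k\sum_v\E_\lambda H(e_v\mid\cdots)$. That identity is delicate.
\begin{itemize}
\item For a fixed labelling of $E(\cH)$ it is false. For $\cH=K_4$ the right-hand side is at most $\log 2<\log 3=H(\cM)$, because knowing which matching edges lie below $\lambda_{e_v}$ leaks information about $e_v$.
\item With fresh i.i.d.\ labels on the edges of $\cM$, and conditioning on the value of $\lambda_{e_v}$, it is true, but only by a computation you have not supplied.
\end{itemize}
The paper sidesteps this by labelling \emph{vertices}. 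With $\lambda(v)$ i.i.d.\ uniform, the chain rule over vertices in decreasing label order is legitimate. Then $N_v=1$ unless $v$ is first in $X_v$, which has probability $\lambda(v)^{k-1}$. A non-bad edge $E\ni v$ is still available with probability $\lambda(v)^{k(k-1)}$: the $k-1$ matching edges met by $E\setminus\{v\}$ are distinct, differ from $X_v$, and cover $k(k-1)$ vertices. Since $\int_0^1x^{k-1}g(x^k)\,dx=\frac1k\int_0^1g(u)\,du$, this yields exactly your $\frac1k$-weighted bound (with $1-\lambda$ renamed $u$) as an inequality, which is all you need.

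\paragraph{Bad edges.} A bad edge can survive with probability as large as $1-\lambda$, for example when $E\setminus\{v\}$ lies inside a single matching edge. So the factor $(1-\lambda)^{k-2}$ is not valid. Bound its survival by $1$ and use $\sum_v|B_v|=k|B|\le N\binom k2D\phi$, as the paper does.

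\paragraph{The error term near $\lambda=1$.} On $[\lambda_0,1]$ the main term integrates to only $\phi^{1/(k-1)}\bigl(\log(D\phi)-(k-1)\bigr)$. A crude bound of size $\phi^{1/(k-1)}\log(kD)$ there therefore loses $\phi^{1/(k-1)}\bigl(\log(1/\phi)+O(k)\bigr)$. This is not $O(k\phi^{1/(k-1)})$ when $\log(1/\phi)\gg k$, which is precisely the regime $\phi=1/(n-k+1)$, $k=o(\log n)$ of Theorem~\ref{th2}. Separately, just below $\lambda_0$ the error $\binom k2D\phi$ exceeds the main term $D\phi$. So ``the main term dominates'' is false pointwise, although the integrated loss on $[0,\lambda_0]$ is small.

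The fix is a direct estimate. Since $D\phi\ge1$ (some pair lies in an edge), with $s=1-\lambda$ and $C=\binom k2+1$ the integrand is at most $\log D+\log(s^{k-1}+C\phi)$. Set $a:=(C\phi)^{1/(k-1)}$ and bound this by:
\begin{itemize}
\item $\log D+\log(2C\phi)$ for $s\le a$;
\item $\log D+(k-1)\log s+C\phi s^{1-k}$ for $s\ge a$.
\end{itemize}
Integrating gives $\log D-(k-1)+O(ka)=\log D-(k-1)+O(k\phi^{1/(k-1)})$ for $k\ge3$.

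The paper's own evaluation $\int_a^{1+a}\log w\,dw=-1+O(a)$ also drops an $a\log(1/a)$ term. After multiplying by $k-1$, this is the same $\phi^{1/(k-1)}\log(1/\phi)$ loss, so this direct estimate is what actually delivers the lemma's error term.
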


We refer the unfamiliar reader to the book by Cover and Thomas \cite{CT} for an introduction to entropy. 

\begin{proof}
    Let $\cX$ be the collection of all perfect matchings in $\cH$, and let $X \in \cX$ be chosen uniformly at random. Let $\lambda:=V\rightarrow [0, 1]$ be uniform, i.e. for each $v \in V$ we choose $\lambda(v) \in [0,1]$ uniformly and independently. For each $v \in V$ we let $X_v$ be the edge of the matching $X$ which covers $v$. We let $X_v^{\lambda}$ be the set of edges $\{X_{v'}: \lambda(v') > \lambda(v)\}$. We imagine revealing the edges of $X$ by revealing $X_v$ for each vertex $v$ in order of decreasing $\lambda$ value. Of course, when we reach $v$ we might already know $X_v$, which happens if some other vertex $u\in X_v$ has $\lambda(u) > \lambda(v)$. For any fixed $\lambda$, the entropy of $X$ is
\beq{eqn:entropy1}
{
H(X)=  \log(|\cX|)    = \sum_{v \in V} H(X | X_v^\l),
}
where the first equality is because $X$ is uniform, and the second is the chain rule for entropy. For each $v$, we let $N_v$ be the number of ``possible'' (explained momentarily) edges that could be $X_v$ if we already know $X_v^\lambda$. If $v$ is not the first vertex in $X_v$ to be revealed, then we already know $X_v$ and so the number of possibilities is $N_v=1$. Otherwise, $N_v$ is the number of edges of $\cH$ containing $v$ and which do not intersect with $X_{v'}$ for any $v'$ with $\lambda(v')>\lambda(v)$. Note that $N_v$ depends on $\lambda$. Then from \eqref{eqn:entropy1} we get
\beq{eqn:entropy2}
          {
           \log(|\cX|) = \sum_{v \in V} H(X | X_v^\l) \le \sum_{v \in V} \E_X[\log(N_v)] ,
}
where the  inequality is because the entropy of any random variable is at most the log of its support. The above is true for any fixed $\lambda$, so it remains true if we take the expected value of both sides over all possible $\lambda$. We get
\beq{eqn:entropy3}
          {
           \log(|\cX|) \le  \E_\lambda \sum_{v \in V} \E_X[\log(N_v)] = \E_X  \sum_{v \in V} \E_{\l(v)}  \E_{\l }[\log(N_v) | \l(v)],
}
where in the final expression (starting with the innermost expectation), we take the expectation over our choice of values $\lambda(v'), v' \neq v$ given some fixed $X$ and $\lambda(v)$. Then we take the expectation over $\lambda(v)$, sum this over $v \in V$, and take the expectation over the random matching $X$.
    
    We let $\cF_v$ be the event that $v$ comes first among all vertices of $X_v$. We let $B=B(X) \subseteq E(\cH)$ be the  set of edges $e$ such that some edge of $X$ shares at least two vertices with $e$. We call the edges counted by $B$ {\em bad}. For each $v \in V$ let $B_v$ be the set of bad edges containing $v$. Note that 
    \beq{eqn:bbound}
    {
    |B| \le |X| \binom k2 D\phi \le knD\phi.
    }
      We bound the innermost expectation of \eqref{eqn:entropy3}.
\begin{align*}
     \E_{\l }[\log(N_v) | \l(v)]  &= \l(v)^{k-1} \E_{\l }[\log(N_v) | \l(v), \cF_v]\\
     & \le \l(v)^{k-1}  \log\brac{\E_{\l }[N_v | \l(v), \cF_v]}\\
     & \le \l(v)^{k-1} \log \brac{1 + d_v\l(v)^{k(k-1)} +  |B_v|}.
\end{align*}
{\bf Explanation:} The first line follows since $N_v=1$ whenever $\cF_v$ fails, and the probability of $\cF_v$ is $\l(v)^{k-1}$. The second line is Jensen's inequality. To get the last line, for each edge containing $v$ we bound the probability that this edge is counted by $N_v$. Of course $X_v$ is counted, explaining the ``1'' inside the logarithm. For any bad edge we trivially bound the probability it is counted by 1, explaining the ``$|B_v|$''. Now for any edge $E=\{v=v_1, v_2, \ldots, v_k\}$ that is not bad,  the edge  counts for $N_v$ only if all the $k(k-1)$ vertices in $X_{v_2}, \ldots, X_{v_k}$ appear after $v$ in the ordering $\lambda$, which happens with probability $\lambda(v)^{k(k-1)}$.

   Now taking the expectation $\E_{\l(v)}$ of the above we get the bound 
    \begin{align*}
     \E_{\l(v)} \E_{\l }[\log(N_v) | \l(v)]  & \le \E_{\l(v)} \left[ \l(v)^{k-1} \log \brac{1 + d_v\l(v)^{k(k-1)} +  |B_v|}\right]\\
     & = \int_0^1 x^{k-1} \log\brac{1+d_vx^{k(k-1)} +  |B_v|} \; dx.
\end{align*}
Summing over $v$ we get
\begin{align}
     \sum_{v \in V} \E_{\l(v)} \E_{\l }[\log(N_v) | \l(v)]  & \le \sum_{v \in V} \int_0^1 x^{k-1} \log\brac{1+d_vx^{k(k-1)} + |B_v|} \; dx\nonumber\\
     & \le N \int_0^1 x^{k-1} \log\brac{1+Dx^{k(k-1)} +kD\phi} \; dx\nonumber\\
     & \le  \frac Nk \int_0^1 \log D +  \log\brac{u^{k-1} + 2k\phi} \; du\nonumber\\
     & \le  \frac Nk \int_0^1 \log D +  (k-1)\log\brac{u+ \brac{2k\phi}^{\frac{1}{k-1}}} \; du\label{eqn:entropy4}
\end{align}
{\bf Explanation:} The second line follows by convexity of $\log(x)$. On the third line we have used that $1 \le kD \phi$ and substituted $u = x^k$. On the fourth line we have used 
\[
u^{k-1} + 2k\phi \le \brac{u+  \brac{2k\phi}^{\frac{1}{k-1}}}^{k-1}.
\]
Now to evaluate the integral in \eqref{eqn:entropy4}, we have
\begin{align*}
    \int_0^1 \log\brac{u+  \brac{2k\phi}^{\frac{1}{k-1}}} \; du &= \int_{ \brac{2k\phi}^{\frac{1}{k-1}}}^{1+ \brac{2k\phi}^{\frac{1}{k-1}}} \log w \; dw\\
    & = \Big[w \log w - w \Big]_{ \brac{2k\phi}^{\frac{1}{k-1}}}^{1+ \brac{2k\phi}^{\frac{1}{k-1}}}\\
    & = -1 + O\brac{\phi^{\frac{1}{k-1}}}.
\end{align*}
Returning to \eqref{eqn:entropy3} and using \eqref{eqn:entropy4} (and evaluating the integral as above), we have
\beq{entropy5}
{
\log(|\cX|) \le \E_X \frac Nk \brac{ \log D - k + 1 + O\brac{k \phi^{\frac{1}{k-1}}}} = \frac Nk \brac{ \log D - k + 1 + O\brac{k \phi^{\frac{1}{k-1}}}}
}
where the last equality is just because there is no dependence on $X$ anymore. 
\end{proof}

\begin{proof}[Proof of the upper bound in Theorem \ref{th4} part \ref{th4b})]
    Let $V' \subseteq V$ with $|V'|=N' \le N$. $\cH[V']$ is a $k$-uniform hypergraph with average degree at most $D$. 
So by Lemma \ref{lem:perfmatch} there are at most 
    \[
    \brac{\brac{1+O\brac{\phi^\frac{1}{k-1}}} De^{1-k}}^{\frac{N'}{k}} 
    \]
    perfect matchings of $\cH[V']$.  Summing over choices of $V'$ and possible perfect matchings of $\cH[V']$ we see that the number of matchings in $\cH$ is at most 
       \begin{align*}
            \sum_{N' = 0}^N \binom {N}{N'} \brac{\brac{1+O\brac{\phi^\frac{1}{k-1}}} De^{1-k}}^{\frac{N'}{k}} &= \brac{1 + \brac{\brac{1+O\brac{\phi^\frac{1}{k-1}}} De^{1-k}}^{\frac 1k}}^N\\
            & = \brac{\brac{1+O\brac{\phi^\frac{1}{k-1}}} De^{1-k}}^{\frac Nk}.
       \end{align*}

\end{proof}

   \section{Proof of Theorems \ref{th2} and \ref{th3}}      

        \begin{proof}[Proof of Theorem \ref{th2}]
            As we discussed in the introduction, there is a one-to-one correspondence between sparse paving matroids of rank $k$ on $[n]$ and partial Steiner systems $S_p(n, k, k-1)$. Meanwhile, these partial Steiner systems correspond to matchings in the following hypergraph. Let $\cH$ be the hypergraph with
            \[
            V(\cH) = \binom{[n]}{k-1}, \qquad E(\cH) = \left\{\binom{K}{k-1}: K \in \binom{[n]}{k}\right\}.
            \]
           We apply Theorem \ref{th4} part \ref{th4b}. $\cH$ is $k$-uniform and has $N = \binom{n}{k-1}$ vertices. $\cH$ is $D$-regular with $D = n-k+1$. Any pair of vertices is in at most one edge altogether, so we can take $\phi = 1/D$. By taking $k = o(\log n)$ we satisfy \eqref{eqn:th4assumption}. The theorem follows. 
        \end{proof}

      Next we prove Theorem \ref{th3}. The lower bound for our estimate follows from Theorem \ref{th2}. The upper bound for our estimate will follow from results in \cite{PV, HPV}. In those papers $k$ was fixed, so here we just have to check how fast $k$ can grow. Let
          \beq{eqn:f45}
          {
          f^{(4.5)}_k(n): = \begin{cases}
             3k\brac{\frac{8k}{n} + \frac{1}{n-k+1}}^\frac{1}{k-1} + k \frac{\log(e\sqrt{\log n})}{\sqrt{\log n}}+ \frac{3}{\log n}, & n \ge n_0\\
             C_k, & n < n_0
          \end{cases}
          }
          where $C_k$ depends only on $k$, and $n_0=n_0(k)$ is large enough so that for all $n \ge n_0$ we have
          \beq{eqn:n0}
          {
          \log(e(n-k+1)\log^2(n))\le \frac{k-1}{2} \log(e^{1-k}(n-k+1)).
          }
           In \cite{HPV} they show the following, which will help us bound $p(n, k)$.
      \begin{lemma}[Lemmas 4.4 and 4.5 \cite{HPV}]\label{lem:hpv}
          Let $k \ge 4$ and $n \ge \exp\brac{ \brac{\frac{(k-1)(k+1)}{k-3}}^2}$. Then 
          \beq{eqn:pnkbound}
          {
          p(n, k) = \sum_{a=0}^{\binom{n}{k-1}} \sum_{b=0}^{\binom{n}{k-1}} p(n, k, a, b)
          }
          where
           \beq{eqn:pnkabbound}
          {
          \log p(n, k, a, b) \le \frac1k \binom{n}{k-1}\brac{\log(n-k+1) +1-k+f_k^{(4.5)}(n)}
          }
          
      \end{lemma}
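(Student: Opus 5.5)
The plan is not to reprove the counting from scratch. I would retrace the proofs of Lemmas 4.4 and 4.5 in \cite{HPV}, keeping every dependence on $k$ explicit. The identity \eqref{eqn:pnkbound} is essentially bookkeeping. A paving matroid of rank $k$ is determined by its hyperplanes, and these form a partition of $\binom{[n]}{k-1}$: every $(k-1)$-set lies in exactly one hyperplane, which has size $k-1$, $k$, or more. I would let $a$ be the number of $(k-1)$-sets covered by hyperplanes of size $>k$, and $b$ the number covered by hyperplanes of size exactly $k$. Sorting paving matroids by the pair $(a,b)$, both ranging over $0,\dots,\binom{n}{k-1}$, gives \eqref{eqn:pnkbound} with $p(n,k,a,b)$ the number in each class. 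The work is in \eqref{eqn:pnkabbound}.

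For fixed $(a,b)$ I would encode a paving matroid in two parts.

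\emph{The large hyperplanes.} Following \cite{PV}, each large hyperplane is recorded through a small witnessing set, e.g.\ $k$ of its elements together with a short description of its remaining elements, rather than as an arbitrary subset. The large hyperplanes pairwise meet in fewer than $k-1$ points. So their number is at most about $a/\binom{k}{k-1}$, and the description cost per covered $(k-1)$-set is $O(\log(e\sqrt{\log n})/\sqrt{\log n})$ times $k$. This should produce the second term of $f^{(4.5)}_k(n)$, with $3/\log n$ absorbing rounding and the choice of splitting thresholds.

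\emph{The size-$k$ hyperplanes.} These form a matching in the hypergraph $\cG_{n,k,k-1}$ restricted to the $(k-1)$-sets not already covered. That hypergraph is $k$-uniform with average degree at most $D=n-k+1$ and codegree at most $1$, so $\phi=1/D$. Lemma \ref{lem:perfmatch}, applied on the covered vertex set and summed over vertex subsets exactly as in the upper bound of Theorem \ref{th4} part \ref{th4b}), bounds their number by $\exp\big(\frac bk(\log(n-k+1)+1-k+O(k\phi^{1/(k-1)}))\big)$. Tracking the constant in the $O$ and the extra slack $8k/n$ from the non-regularity (degrees vary between $n-k+1$ and $n$, and we also sum over supports) should give the first term $3k\brac{\frac{8k}{n}+\frac1{n-k+1}}^{1/(k-1)}$.

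To combine the two parts, I would bound the per-$(k-1)$-set cost of the large-hyperplane encoding by the per-$(k-1)$-set gain $\frac1k(\log(n-k+1)+1-k)$ that a matching would have given. This is exactly what \eqref{eqn:n0} provides: $\log(e(n-k+1)\log^2 n)$ is at most half of $(k-1)\log(e^{1-k}(n-k+1))$. The hypothesis $n\ge \exp\big((\frac{(k-1)(k+1)}{k-3})^2\big)$ should ensure that a large hyperplane of size $m>k$ covering $\binom{m}{k-1}$ sets is never more costly than $\binom{m}{k-1}/k$ matching edges. With $a+b\le \binom{n}{k-1}$ and the $(k-1)$-sets of size-$(k-1)$ hyperplanes contributing nothing, the total is at most $\frac1k\binom{n}{k-1}(\log(n-k+1)+1-k+f^{(4.5)}_k(n))$. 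For $n<n_0$ the bound is trivial with a suitable $C_k$.

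The main obstacle is the large-hyperplane step. In \cite{HPV} that encoding's constants are only $O_k(1)$, and I must check that they are really linear in $k$, giving the $k\frac{\log(e\sqrt{\log n})}{\sqrt{\log n}}$ term, rather than exponential in $k$. I would first try splitting the large hyperplanes by size at threshold $\sqrt{\log n}$ and treating the two ranges separately.
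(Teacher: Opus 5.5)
The paper gives no proof of this statement. It quotes it from Lemmas 4.4 and 4.5 of \cite{HPV}, with $f^{(4.5)}_k$ explicit in $k$, and only checks that $n\ge n_0(k)$ and $f^{(4.5)}_k(n)=o(1)$ when $k=o(\log^{1/2}n/\log\log n)$. Your bookkeeping for \eqref{eqn:pnkbound} is fine. Your sketch of \eqref{eqn:pnkabbound}, however, has a genuine gap at the large-hyperplane step.

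\emph{The proposed encoding is too expensive.} Recording about $k$ elements of each large hyperplane costs about $k\log n$ per hyperplane. A hyperplane of size $k+1$ covers only $\binom{k+1}{2}$ of the $(k-1)$-sets, so this is about $\frac{2\log n}{k+1}$ per covered set. That exceeds the budget $\frac1k(\log(n-k+1)+1-k)$ for every $k\ge 2$.

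\emph{The cost cannot be absorbed into $f^{(4.5)}_k$ either.} A family of size-$(k+1)$ hyperplanes is a partial Steiner system $S_p(n,k+1,k-1)$. For fixed $k$, such systems covering a constant fraction of the $(k-1)$-sets number $\exp(\Theta(\binom{n}{k-1}\log n))$. So their cost per covered set is of order $\log n$, not the $O(\log\log n/\sqrt{\log n})$ you claim.

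\emph{The missing idea.} Anchor each large hyperplane at one of its own $(k-1)$-subsets whose identity costs nothing. For example, process the $(k-1)$-sets in a fixed order, or use the chain rule as in Lemma \ref{lem:perfmatch}. Then a hyperplane of size $m$ costs only about $(m-k+1)\log n$, and you must show $(m-k+1)\log n\lesssim \binom{m}{k-1}\frac1k(\log(n-k+1)+1-k)$. The binding case $m=k+1$ reads roughly $4\log n\le (k+1)(\log n-k+1)$, i.e.\ $(k-3)\log n\ge (k-1)(k+1)$. This is where $k\ge 4$ is needed, and it matches the shape of the hypothesis $n\ge \exp\big(\big(\frac{(k-1)(k+1)}{k-3}\big)^2\big)$. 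Your final paragraph states this cost-versus-gain comparison, but with the encoding you actually propose it is false, so the argument does not close.

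Two smaller points.
\begin{enumerate}
\item Summing over supports exactly as in the upper bound of Theorem \ref{th4} part \ref{th4b}) loses a factor $(1+1/x)^N$, where $x=((n-k+1)e^{1-k})^{1/k}$. This is an additive error of about $k(e^{k-1}/(n-k+1))^{1/k}$ inside the bracket. For fixed $k$ this is of larger order than $3k(\frac{8k}{n}+\frac{1}{n-k+1})^{1/(k-1)}$. So this route does not yield $f^{(4.5)}_k$ as stated, though it would still be $o(1)$ in the range of Theorem \ref{th3}.
\item $\cG_{n,k,k-1}$ is exactly $(n-k+1)$-regular, so the $8k/n$ term cannot come from degree variation.
\end{enumerate}
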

In \cite{PV} they showed the following. Once we bound $p(n, k)$ we will use this to bound $m(n, k)$. 
\begin{lemma}[Lemma 21 in \cite{PV}]\label{lem:pv}
    $m(n, k) \le m(n, k-1)p(n, k).$
\end{lemma}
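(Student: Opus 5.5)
The plan is to construct an injection $M\mapsto (T(M),P(M))$ from rank-$k$ matroids on $[n]$ into pairs consisting of a rank-$(k-1)$ matroid on $[n]$ and a rank-$k$ paving matroid on $[n]$. That would give $m(n,k)\le m(n,k-1)p(n,k)$ at once. For the first coordinate I take the truncation $T(M)$, the matroid with rank function $r_{T}(X)=\min\{r_M(X),k-1\}$; it is a standard fact that this is a matroid of rank $k-1$.

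The first step is to record how much of $M$ the truncation already pins down. From $T(M)$ we can read off exactly which $(k-1)$-sets are independent in $M$, since these are the bases of $T(M)$. If a $k$-set $B$ has a $(k-1)$-subset that is dependent in $M$, then $r_M(B)\le k-1$, so $B$ is not a basis of $M$. The only $k$-sets whose status is left undecided are therefore those all of whose $(k-1)$-subsets are independent. Among these, the dependent ones are exactly the $k$-element circuits of $M$. So $M$ is determined by $T(M)$ together with any family $\mathcal N$ of $k$-sets that contains every $k$-circuit of $M$ and consists only of $M$-dependent sets. Indeed, with such an $\mathcal N$ we recover
\[
\cB(M)=\set{B\in\tbinom{[n]}{k}: \text{every $(k-1)$-subset of $B$ is a basis of $T(M)$ and } B\notin\mathcal N}.
\]

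The second step is to produce such an $\mathcal N$ that is the set of non-bases of a paving matroid $P(M)$. I would use the hyperplane description of paving matroids: a family of sets of size at least $k-1$, pairwise meeting in at most $k-2$ elements, and covering every $(k-1)$-set. For each independent $(k-1)$-set $I$ that lies in some $k$-circuit, let $F=\mathrm{cl}_M(I)$ and let $H_I\subseteq F$ be the union of $I$ with all $e$ such that $I\cup\{e\}$ is a $k$-circuit. I then merge these sets: whenever two of them share $k-1$ elements forming an independent set, both lie in the same rank-$(k-1)$ flat of $M$, so every $k$-subset of their union is $M$-dependent. Finally I add every uncovered $(k-1)$-set as a singleton hyperplane. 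The non-bases of the resulting paving matroid contain all $k$-circuits, and I want to check that they are all $M$-dependent.

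The main obstacle is that two candidate hyperplanes may share a $(k-1)$-set $K$ that is dependent in $M$. In that case they lie in different flats of $M$ and cannot be merged without creating non-bases that are $M$-independent. To handle this I would first try to shrink each $H_I$ to a maximal subset of $\mathrm{cl}_M(I)$ all of whose $(k-1)$-subsets are $M$-independent. Two such shrunken sets meeting in $k-1$ elements would then share an independent $(k-1)$-set, hence span the same flat and coincide after merging, which restores the pairwise $\le k-2$ intersection condition. What remains to check is that every $k$-circuit still lies inside one shrunken block. This should hold because a $k$-circuit $C$ has all its $(k-1)$-subsets independent and $C\subseteq \mathrm{cl}_M(C)$, so I would take the block inside that flat to be grown greedily starting from $C$. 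The delicate point is whether two different greedy choices inside one flat can overlap in $k-1$ elements. If they can, they share an independent $(k-1)$-set, so I would merge them and recheck that the $(k-1)$-subset independence condition survives the merge. This verification is where I expect to spend most of the effort.
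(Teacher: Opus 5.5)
Your first step is correct, but it commits you to something that cannot be delivered, so the ``delicate point'' is a real obstruction, not a verification left to do. Every basis of $M$ has all its $(k-1)$-subsets independent. Your decoding rule therefore forces $\mathcal N$ to contain every $k$-circuit of $M$ and no basis of $M$. A paving matroid with such a set of non-bases need not exist.

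Take $k=3$ and the rank-$3$ matroid represented over $\mathbb{R}$ by $c=(1,0,0)$, $c'=(2,0,0)$, $a=(0,1,0)$, $b=(1,1,0)$, $e=(0,0,1)$, $f=(1,0,1)$. Then $abc, abc', efc, efc'$ are $3$-circuits. In a rank-$3$ paving matroid, distinct hyperplanes share at most one element. So if $abc$ and $abc'$ are non-bases of $P$, they lie in a single hyperplane containing $\{a,b,c,c'\}$. Likewise $\{e,f,c,c'\}$ lies in a single hyperplane. These two hyperplanes share $\{c,c'\}$, so they coincide, and $ace$ is a non-basis of $P$ even though it is a basis of $M$.

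This is exactly where your merge step fails. The maximal uniform subsets $\{a,b,c\}$ and $\{a,b,c'\}$ of one line share the independent set $\{a,b\}$. Their union contains the dependent pair $\{c,c'\}$, and then collides with the block from the other line. Parallel elements are not essential: in rank $4$, take two planes of a simple matroid meeting in a $3$-point line, each carrying two further points in general position. They produce the same collision.

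The missing idea is that $P(M)$ should not record all $k$-circuits. It only needs to record enough of each hyperplane for $T(M)$ to rebuild the rest.

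For each hyperplane $H$ of $M$ containing a $k$-circuit, choose a \emph{single} maximal $H'\subseteq H$ that contains such a circuit and has all $(k-1)$-subsets independent, and never merge. Distinct $H'$ meet in at most $k-2$ elements, since a common $(k-1)$-set would be independent and span both hyperplanes. Also $|H'|\ge k$. So the sets $H'$ are the large hyperplanes of a paving matroid $P(M)$, all of whose non-bases are $k$-circuits of $M$.

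To decode, use maximality. For each $e\in H\sm H'$ there is some $J\in\binom{H'}{k-2}$ with $J\cup\set{e}$ dependent. Since $r_M(J)=k-2<k-1$, we have $\mathrm{cl}_M(J)=\mathrm{cl}_{T(M)}(J)$. Hence $H=H'\cup\bigcup_{J\in\binom{H'}{k-2}}\mathrm{cl}_{T(M)}(J)$ is computable from $(T(M),P(M))$. The $k$-circuits of $M$ are then exactly the $k$-sets whose $(k-1)$-subsets are all bases of $T(M)$ and which lie in one of these reconstructed $H$. This gives the injection $M\mapsto(T(M),P(M))$.

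The paper itself gives no proof; it imports the lemma from \cite{PV}.
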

      
      \begin{proof}[Proof of Theorem \ref{th3}] Since $m(n, k) \ge p(n, k) \ge s(n, k)$, the lower bounds on $\log m(n, k)$ and $\log p(n, k)$ follow from Theorem \ref{th2}. 
      \eqref{eqn:pnkbound} and \eqref{eqn:pnkabbound} give
      \begin{align}
           \log p(n, k) &\le \log\brac{\binom{n}{k-1}^2 \max_{a, b} \{p(n, k, a, b)\}}\nonumber\\
           &\le 2 \log \binom{n}{k-1} + \frac1k \binom{n}{k-1}\brac{\log(n-k+1) +1-k+f_k^{(4.5)}(n)}\label{eqn:39}\\
           & =  \frac1n \binom{n}{k}\Big(\log n +1-k+o(1)\Big),\label{eqn:40}
      \end{align}
     where the last line is justified as follows. The first term in \eqref{eqn:39} is absorbed by the $o(1)$ term in \eqref{eqn:40}. Also, $\log(n-k+1) = \log n + o(1)$. Finally we claim that $f_k^{(4.5)}(n)=o(1)$. Indeed, first note that as $k = o(\log^{1/2}n / \log \log n)$ and  $n \rightarrow \infty$ we have $n \ge n_0(k)$ since $\eqref{eqn:n0}$ holds. Furthermore, examining the first line of \eqref{eqn:f45}, we see that $f_k^{(4.5)}(n)=o(1)$. 

      Now we bound $m(n, k)$. Note that $m(n, 0)=1$ and so, applying Lemma \ref{lem:pv} iteratively we have
      \begin{align*}
          \log m(n, k) &\le \log \big(p(n, k) p(n, k-1) \cdots p(n, 1)\big)\\
          & = \sum_{j=1}^k \log p(n, j)\\
          & =  \frac1n \binom{n}{k}\Big(\log n +1-k+o(1)\Big),
      \end{align*}
      where the last line follows from using \eqref{eqn:40} to bound $\log p(n, k)$ and absorbing all other terms into the $o(1)$. This completes the proof of Theorem \ref{th3}.
  \end{proof}

{\bf Acknowledgement:} we thank Jasper Dekoninck and Tim Gehrunger for pointing to an error in part (b) of Theorem \ref{th1}.
\bibliographystyle{abbrv}
\bibliography{refs.bib}

\end{document}